\newtheorem{theorem}{Theorem}[section]
\newtheorem{lemma}[theorem]{Lemma}
\newtheorem{corollary}[theorem]{Corollary}
\newtheorem{proposition}[theorem]{Proposition}
\theoremstyle{definition}
\newtheorem{definition}[theorem]{Definition}
\newtheorem{definition-proposition}[theorem]{Definition-Proposition}
\newtheorem{remark}[theorem]{Remark}
\newtheorem{example}[theorem]{Example}
\def\C{\mathcal{C}}
\def\D{\mathcal{D}}
\def\T{\mathcal{T}}
\def\W{\mathscr{W}}
\def\X{\mathscr{X}}
\def\Y{\mathscr{Y}}
\def\Z{\mathcal {Z}}
\def\I{\mathcal {I}}
\def\M{\mathcal{M}}
\def \text{\mbox}
\providecommand{\add}{\mathop{\rm add}\nolimits}%
\providecommand{\Hom}{\mathop{\rm Hom}\nolimits}%
\def\cO{\mathscr{O}}
\def\cT{\mathscr{T}}
\def\clap#1{\hbox to 0pt{\hss#1\hss}}
\newcommand{\leftsup}[2]{{\vphantom{#2}}^{#1}{#2}}
\providecommand{\nc}{\mathop{\rm nc}\nolimits}%
\numberwithin{equation}{section}
\begin{document}

\title{Cotorsion pairs in $(d+2)$-angulated categories}

\author[Chang]{Huimin Chang}
\address{
Department of Applied Mathematics,
The Open University of China,
100039 Beijing,
P. R. China
}
\email{changhm@ouchn.edu.cn}
\author[Zhou]{Panyue Zhou$^\ast$}
\address{School of Mathematics and Statistics, Changsha University of Science and Technology, 410114 Changsha, Hunan,  P. R. China}
\email{panyuezhou@163.com}

\thanks{$^\ast$Corresponding author.}

\begin{abstract}
Let $\C$ be a $(d+2)$-angulated category. In this paper, we define the notions of cotorsion pairs and weak cotorsion pairs in $\C$, which are generalizations of the classical cotorsion pairs in triangulated categories. As an application, we give a geometric characterization of weak cotorsion pairs in $(d+2)$-angulated cluster categories of type $A$. Moreover, we prove that any mutation of a (weak) cotorsion pair in $\C$ is again a (weak) cotorsion pair. When $d=1$, this result generalizes the work of Zhou and Zhu on classical cotorsion pairs in triangulated categories.
\end{abstract}

\subjclass[2020]{18E40; 05E10; 18G80}

\keywords{cotorsion pair; mutation; $(d+2)$-angulated category; cluster tilting subcategory}

\thanks{Huimin Chang is supported by the National Natural Science Foundation of China (Grant No. 12301047).
Panyue Zhou is supported by the National Natural Science Foundation of China (Grant No. 12371034) and by the Hunan Provincial Natural Science Foundation of China (Grant No. 2023JJ30008). }

\maketitle

\tableofcontents

\section{Introduction}
The concept of torsion pairs in abelian categories was first introduced by Dickson \cite{D}, with its triangulated version later explored by Iyama and Yoshino \cite{IY}. Nakaoka \cite{N} then introduced cotorsion pairs in triangulated categories, aiming to unify the abelian structures that arise from $t$-structures \cite{BBD} and from cluster tilting subcategories \cite{KR,KZ,IY}. In triangulated categories, torsion pairs and cotorsion pairs are closely related, as they can be transformed into each other by shifting the torsion-free components. Therefore, considering torsion pairs and cotorsion pairs in triangulated categories is essentially equivalent.

In \cite{GKO}, Geiss, Keller, and Oppermann introduced a higher-dimensional generalization of triangulated categories, known as $(d+2)$-angulated categories. These categories extend the classical notion of triangulated categories, with the latter being a special case when $d=1$. $(d+2)$-angulated categories often arise in the context of $d$-cluster tilting subcategories in triangulated categories.

A significant motivation for studying mutation stems from cluster algebras. Fomin and Zelevinsky \cite{FZ} introduced cluster algebras to provide an algebraic and combinatorial framework for understanding the positive and canonical bases of quantum groups. In their work, the mutation of clusters was defined in the context of cluster algebras. To categorize cluster algebras, Buan et al. \cite{BMRRT} introduced cluster categories, which Keller \cite{K} later showed to be triangulated. In these categories, rigid indecomposable objects correspond to cluster variables, while cluster tilting subcategories correspond to clusters. Similar to the mutation of clusters, the mutation of cluster tilting subcategories involves replacing one indecomposable object with a unique alternative such that the result is again a cluster tilting subcategory. For more details, see \cite{BMRRT}.
As a further generalization, the mutation of cluster tilting subcategories in arbitrary Krull-Schmidt $K$-linear triangulated categories was studied in \cite{BIRS,IY,P}. Iyama and Yoshino \cite{IY} introduced a broader concept of mutation in triangulated categories, which encompasses the mutation of cluster tilting subcategories as a special case. Zhou and Zhu \cite{ZZ} later proved that any mutation of a torsion pair remains a torsion pair.
After the introduction of $(d+2)$-angulated categories, Lin \cite{L} defined mutation pairs in these categories and proved that, given such a mutation pair, the corresponding quotient category naturally inherits a $(d+2)$-angulated structure.

In this paper, we define cotorsion pairs and weak cotorsion pairs in $(d+2)$-angulated categories, generalizing the classical notion of cotorsion pairs in triangulated categories. It is worth noting that the concept of cotorsion pairs in $(d+2)$-angulated categories introduced here is stricter than the torsion class defined by J{\o}rgensen in \cite{J}. As an application, we provide a geometric characterization of weak cotorsion pairs in $(d+2)$-angulated cluster categories of type $A$. Furthermore, we demonstrate that any mutation of a (weak) cotorsion pair in $(d+2)$-angulated cluster categories remains a (weak) cotorsion pair. When $d=1$, this result extends the work of Zhou and Zhu on classical cotorsion pairs in triangulated categories.

This paper is organized as follows. In Section 2, we provide an overview of $(d+2)$-angulated categories, mutation pairs within these categories, and recall the construction of quotient categories from \cite{L}. In Section 3, we define cotorsion pairs in $(d+2)$-angulated categories and present a geometric characterization of weak cotorsion pairs in $(d+2)$-angulated cluster categories of type $A$. In Section 4, we define mutation of cotorsion pairs, examine their compatibility with cotorsion pairs, and prove that any mutation of a (weak) cotorsion pair in $(d+2)$-angulated cluster categories remains a (weak) cotorsion pair.
\vspace{2mm}

\hspace{-4mm}{\bf Conventions} 
Let $\C$ be a $(d+2)$-angulated category with a $d$-suspension functor $\Sigma^d$.
For objects $X$ and $Y$ in $\C$, we denote the set of morphisms from
$X$ to $Y$ by ${\rm Hom}_{\C}(X,Y)$ or $\C(X,Y)$. The composition of two morphisms
$f\in {\rm Hom}_{\C}(X,Y)$ and $g\in{\rm Hom}_{\C}(Y,Z)$
is written as $g\circ f\in{\rm Hom}_{\C}(X,Z)$.
When we say that $\X$ is a subcategory of $\C$, we always mean that $\C$
is a full subcategory which is closed under isomorphisms, direct sums and
direct summands.
We denote by $\X^\perp$ (resp. $^\perp\X$) the subcategory consisting of objects  $M\in\C$
such that $\Hom_{\C}(\X,M)=0$ (resp. $\Hom_{\C}(M,\X)=0$). For an object $X\in\C$, $\add X$ means the additive closure of $X$.
\vspace{2mm}

\section{Preliminaries}
We recall the definitions of $(d+2)$-angulated categories, mutation pairs in $(d+2)$-angulated categories, and the construction of quotient categories from \cite{L}.

\subsection{$(d+2)$-angulated categories}
In this subsection, we recall some definitions and basic properties of $(d+2)$-angulated categories from \cite{GKO}.
Let $\C$ be an additive category with an automorphism $\Sigma^d:\C\rightarrow\C$,  where $d$ is an integer no less than one.
\vspace{1mm}

A $(d+2)$-$\Sigma^d$-$sequence$ in $\C$ is a sequence of objects and morphisms
$$A_0\xrightarrow{f_0}A_1\xrightarrow{f_1}A_2\xrightarrow{f_2}\cdots\xrightarrow{f_{d-1}}A_n\xrightarrow{f_d}A_{d+1}\xrightarrow{f_{d+1}}\Sigma^d A_0.$$
Its {\em left rotation} is the $(d+2)$-$\Sigma^d$-sequence
$$A_1\xrightarrow{f_1}A_2\xrightarrow{f_2}A_3\xrightarrow{f_3}\cdots\xrightarrow{f_{d}}A_{d+1}\xrightarrow{f_{d+1}}\Sigma^d A_0\xrightarrow{(-1)^{d}\Sigma^d f_0}\Sigma^d A_1.$$
A \emph{morphism} of $(d+2)$-$\Sigma^d$-sequences is  a sequence of morphisms $\varphi=(\varphi_0,\varphi_1,\cdots,\varphi_{d+1})$ such that the following diagram
$$\xymatrix{
A_0 \ar[r]^{f_0}\ar[d]^{\varphi_0} & A_1 \ar[r]^{f_1}\ar[d]^{\varphi_1} & A_2 \ar[r]^{f_2}\ar[d]^{\varphi_2} & \cdots \ar[r]^{f_{d}}& A_{d+1} \ar[r]^{f_{d+1}}\ar[d]^{\varphi_{d+1}} & \Sigma^d A_0 \ar[d]^{\Sigma^d \varphi_0}\\
B_0 \ar[r]^{g_0} & B_1 \ar[r]^{g_1} & B_2 \ar[r]^{g_2} & \cdots \ar[r]^{g_{d}}& B_{d+1} \ar[r]^{g_{d+1}}& \Sigma^d B_0
}$$
commutes, where each row is a $(d+2)$-$\Sigma^d$-sequence.
It is an {\em isomorphism} if $\varphi_0, \varphi_1, \cdots, \varphi_{d+1}$ are all isomorphisms in $\C$.

\begin{definition}\cite[Definition 2.1]{GKO}
A $(d+2)$-\emph{angulated category} is a triple $(\C, \Sigma^d, \Theta)$, where $\C$ is an additive category, $\Sigma^d$ is an automorphism of $\C$ ($\Sigma^d$ is called the $d$-suspension functor), and $\Theta$ is a class of $(d+2)$-$\Sigma^d$-sequences (whose elements are called $(d+2)$-angles), which satisfies the following axioms:
\begin{itemize}
\item[\textbf{(N1)}]
\begin{itemize}
\item[(a)] The class $\Theta$ is closed under isomorphisms, direct sums and direct summands.

\item[(b)] For each object $A\in\C$, the trivial sequence
$$ A\xrightarrow{1_A}A\rightarrow 0\rightarrow0\rightarrow\cdots\rightarrow 0\rightarrow \Sigma^dA$$
belongs to $\Theta$.

\item[(c)] Each morphism $f_0\colon A_0\rightarrow A_1$ in $\C$ can be extended to $(d+2)$-$\Sigma^d$-sequence: $$A_0\xrightarrow{f_0}A_1\xrightarrow{f_1}A_2\xrightarrow{f_2}\cdots\xrightarrow{f_{d-1}}A_d\xrightarrow{f_d}A_{d+1}\xrightarrow{f_{d+1}}\Sigma^d A_0.$$
\end{itemize}
\item[\textbf{(N2)}] A $(d+2)$-$\Sigma^d$-sequence belongs to $\Theta$ if and only if its left rotation belongs to $\Theta$.

\item[\textbf{(N3)}] Each solid commutative diagram
$$\xymatrix{
A_0 \ar[r]^{f_0}\ar[d]^{\varphi_0} & A_1 \ar[r]^{f_1}\ar[d]^{\varphi_1} & A_2 \ar[r]^{f_2}\ar@{-->}[d]^{\varphi_2} & \cdots \ar[r]^{f_{d}}& A_{d+1} \ar[r]^{f_{d+1}}\ar@{-->}[d]^{\varphi_{d+1}} & \Sigma^d A_0 \ar[d]^{\Sigma^d \varphi_0}\\
B_0 \ar[r]^{g_0} & B_1 \ar[r]^{g_1} & B_2 \ar[r]^{g_2} & \cdots \ar[r]^{g_{d}}& B_{d+1} \ar[r]^{g_{d+1}}& \Sigma^d B_0
}$$ with rows in $\Theta$, the dotted morphisms exist and give a morphism of  $(d+2)$-$\Sigma^d$-sequences.

\item[\textbf{(N4)}] In the situation of {\bf (N3)}, the morphisms $\varphi_2,\varphi_3,\cdots,\varphi_{d+1}$ can be chosen such that the mapping cone
$$A_1\oplus B_0\xrightarrow{\left(\begin{smallmatrix}
                                        -f_1&0\\
                                        \varphi_1&g_0
                                       \end{smallmatrix}
                                     \right)}
A_2\oplus B_1\xrightarrow{\left(\begin{smallmatrix}
                                        -f_2&0\\
                                        \varphi_2&g_1
                                       \end{smallmatrix}
                                     \right)}\cdots\xrightarrow{\left(\begin{smallmatrix}
                                        -f_{d+1}&0\\
                                        \varphi_{d+1}&g_d
                                       \end{smallmatrix}
                                     \right)} \Sigma^n A_0\oplus B_{d+1}\xrightarrow{\left(\begin{smallmatrix}
                                        -\Sigma^d f_0&0\\
                                        \Sigma^d\varphi_1&g_{d+1}
                                       \end{smallmatrix}
                                     \right)}\Sigma^dA_1\oplus\Sigma^d B_0$$
belongs to $\Theta$.
   \end{itemize}
\end{definition}

We call $(\mathcal{C},\Sigma^d,\Theta)$ a $pre$-$(d+2)$-$angulated\ category$ if it satisfies $\textbf{(N1)}$, $\textbf{(N2)}$ and $\textbf{(N3)}$. Now we give an example of $(d+2)$-angulated category.

\begin{example}\label{def}
We recall the standard construction of $(d+2)$-angulated categories given by Geiss-Keller-Oppermann \cite[Theorem 1]{GKO}.
Let $\C$ be a triangulated category and $\mathcal{T}$ a $d$-cluster tilting subcategory which is closed under $\Sigma^{d}$, where $\Sigma$ is the shift functor of $\C$. Then $(\mathcal{T},\Sigma^{d},\Theta)$ is a $(d+2)$-angulated category, where $\Theta$ is the class of all sequences
$$A_0\xrightarrow{f_0}A_1\xrightarrow{f_1}A_2\xrightarrow{f_2}\cdots\xrightarrow{f_{d-1}}A_d\xrightarrow{f_d}A_{d+1}\xrightarrow{f_{d+1}}\Sigma^{d} A_0$$
such that there exists a diagram
$$\xymatrixcolsep{0.3pc}
 \xymatrix{& A_1 \ar[dr]\ar[rr]^{f_1}  &  & A_2  \ar[dr]  & & \cdots  & & A_{d} \ar[dr]^{f_{d}}      \\
A_0 \ar[ur]^{f_0} & \mid & \ar[ll]  A_{1.5}\ar[ur] & \mid &  \ar[ll]  A_{2.5} & \cdots & A_{d-1.5}\ar[ur] & \mid & \ar[ll] A_{d+1}   }$$
with $A_i\in\mathcal{T}$ for all $i\in\mathbb{Z}$, such that all oriented triangles are triangles in $\C$, all non-oriented triangles commute, and $f_{d+1}$ is the composition along the lower edge of the diagram.
\end{example}

The following lemma will be useful in what follows.

\begin{lemma}\cite[Theorem 3.1]{LZ}\label{thm1}
Let $(\mathcal{C},\Sigma^d,\Theta)$ be a pre-$(d+2)$-angulated category. Then $\Theta$ satisfies ${\rm \bf(N4)}$ if and only if $\Theta$ satisfies ${\rm\bf (N4\mbox{-}1)}$:

Let  $$\xymatrix{
X_1 \ar[r]^{f_1}\ar@{=}[d] & X_2 \ar[r]^{f_2}\ar[d]^{\varphi_2} & X_3 \ar[r]^{f_3} & \cdots \ar[r]^{f_{d+1}}& X_{d+2} \ar[r]^{f_{d+2}} & \Sigma^d X_1 \ar@{=}[d]\\
X_1 \ar[r]^{g_1} & Y_2 \ar[r]^{g_2} & Y_3 \ar[r]^{g_3} & \cdots \ar[r]^{g_{d+1}} & Y_{d+2} \ar[r]^{g_{d+2}}& \Sigma^d X_1\\
}$$ be a commutative diagram whose rows are $(d+2)$-angles. Then there exist morphisms $\varphi_i:X_i\rightarrow Y_i$ for $3\leq i\leq d+2$ such that $$
X_2\rightarrow X_3\oplus Y_2\rightarrow\cdots\rightarrow X_{d+2}\oplus Y_{d+1}\rightarrow Y_{d+2}\rightarrow \Sigma^d X_2
$$
is a $(d+2)$-angle. Note that we omit the morphisms between these objects.

\end{lemma}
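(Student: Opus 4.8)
Since the statement is an equivalence, I will prove the two implications separately; the forward implication $\mathbf{(N4)}\Rightarrow\mathbf{(N4\mbox{-}1)}$ is the transparent one, while the converse carries the real weight. Throughout I will use the standard fact, provable already in a pre-$(d+2)$-angulated category from $\mathbf{(N1)}$ and $\mathbf{(N3)}$, that consecutive morphisms in a $(d+2)$-angle compose to zero.

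For $\mathbf{(N4)}\Rightarrow\mathbf{(N4\mbox{-}1)}$: the given diagram is exactly the input of $\mathbf{(N3)}$ with $\varphi_1=1_{X_1}$ and $\Sigma^d\varphi_1=1_{\Sigma^d X_1}$, so $\mathbf{(N4)}$ provides fill-ins $\varphi_3,\dots,\varphi_{d+2}$ together with a $(d+2)$-angle, namely the mapping cone
$$X_2\oplus X_1\to X_3\oplus Y_2\to\cdots\to X_{d+2}\oplus Y_{d+1}\to\Sigma^d X_1\oplus Y_{d+2}\to\Sigma^d X_2\oplus\Sigma^d X_1,$$
whose differentials are the matrices $\left(\begin{smallmatrix}-f_{i+1}&0\\\varphi_{i+1}&g_i\end{smallmatrix}\right)$ dictated by $\mathbf{(N4)}$ (with the entry $\Sigma^d\varphi_1=1$ appearing in the last one). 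I would then show that this cone is isomorphic, as a $(d+2)$-$\Sigma^d$-sequence, to the direct sum of the target sequence $X_2\to X_3\oplus Y_2\to\cdots\to Y_{d+2}\to\Sigma^d X_2$ with the left rotation of the trivial angle on $X_1$, that is $X_1\to 0\to\cdots\to 0\to\Sigma^d X_1\xrightarrow{(-1)^d}\Sigma^d X_1$, which lies in $\Theta$ by $\mathbf{(N1)}$(b) and $\mathbf{(N2)}$. The required isomorphism is the identity in the middle degrees and a unipotent shear at the two ends: at the first term it is $\left(\begin{smallmatrix}1&-f_1\\0&1\end{smallmatrix}\right)$ on $X_2\oplus X_1$, with its $\Sigma^d$-twist at the wrap-around term and a matching shear-transposition on $\Sigma^d X_1\oplus Y_{d+2}$. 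What makes this a morphism of sequences is precisely the relation $\varphi_2 f_1=g_1$ read off from the left square (using $\varphi_1=1$), together with $f_2 f_1=0$: these force the column $\left(\begin{smallmatrix}-f_1\\1\end{smallmatrix}\right)\colon X_1\to X_2\oplus X_1$ to be annihilated by the first cone differential. Since $\Theta$ is closed under isomorphisms and direct summands by $\mathbf{(N1)}$(a), the reduced sequence belongs to $\Theta$, which is $\mathbf{(N4\mbox{-}1)}$.

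For the converse $\mathbf{(N4\mbox{-}1)}\Rightarrow\mathbf{(N4)}$ I would proceed in two stages. First, the special case of $\mathbf{(N4)}$ in which the left-hand vertical is an identity ($\varphi_0=1_{A_0}$, $B_0=A_0$) follows at once by running the previous isomorphism backwards: $\mathbf{(N4\mbox{-}1)}$ produces the reduced cone in $\Theta$, and adjoining the trivial summand gives the full cone in $\Theta$ by closure under direct sums. Second, and this is where the work lies, I would reduce an arbitrary morphism $\varphi=(\varphi_0,\dots,\varphi_{d+1})$ to this identity case. Setting $h:=\varphi_1 f_0=g_0\varphi_0\colon A_0\to B_1$, I would use $\mathbf{(N1)}$(c) to complete $h$ to a $(d+2)$-angle $\mathcal{P}\colon A_0\xrightarrow{h}B_1\to P_2\to\cdots\to\Sigma^d A_0$, and likewise complete $\varphi_0$ to an angle $\mathcal{U}\colon A_0\xrightarrow{\varphi_0}B_0\to U_2\to\cdots\to\Sigma^d A_0$. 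Both the top angle $(A_\bullet)$ and $\mathcal{U}$ then map to $\mathcal{P}$ by a morphism whose left-hand vertical is $1_{A_0}$, with respective second verticals $\varphi_1$ and $g_0$, the relevant squares commuting because $\varphi_1 f_0=h$ and $g_0\varphi_0=h$. Applying $\mathbf{(N4\mbox{-}1)}$ to these two morphisms yields two $(d+2)$-angles, and I would assemble them, together with $\mathcal{U}$, to identify the mapping cone of $\varphi$ up to isomorphism and trivial summands; closure of $\Theta$ under isomorphisms, direct sums and summands then finishes the proof.

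The main obstacle is this assembly step in the converse: it is the $(d+2)$-angulated incarnation of the $3\times 3$/octahedral argument, and the difficulty is entirely in the bookkeeping — matching the auxiliary objects $P_\bullet$ and $U_\bullet$ against the actual terms $A_{i+1}\oplus B_i$ of the cone of $\varphi$, and especially tracking the signs $(-1)^d$ introduced by left rotation in $\mathbf{(N2)}$ and the sign conventions in the $\mathbf{(N4)}$ cone differentials. A secondary but genuine point, already in the forward direction, is verifying that the proposed shear isomorphism commutes with all $d+2$ differentials; I expect this to reduce cleanly to the relations $\varphi_2 f_1=g_1$ and $f_2 f_1=0$ and their $\Sigma^d$-shifted analogues at the top degree, but it must be checked termwise.
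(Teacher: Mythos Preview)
The paper does not give its own proof of this lemma: it is stated with the citation \cite[Theorem 3.1]{LZ} and used as a black box, with no proof environment following it. There is therefore nothing in the present paper to compare your attempt against.

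That said, your outline is along the expected lines for this kind of result. The forward implication is handled correctly: applying $\mathbf{(N4)}$ with $\varphi_1=1$ and then splitting off the trivial rotation summand via a shear isomorphism is the standard manoeuvre, and you have identified the right relations ($\varphi_2 f_1=g_1$ and $f_2 f_1=0$) that make the shear a morphism of sequences. For the converse, your two-stage plan --- first recover the identity-$\varphi_0$ case by reversing the splitting, then factor a general $(\varphi_0,\varphi_1)$ through an auxiliary angle on $h=\varphi_1 f_0=g_0\varphi_0$ --- is the right strategy and matches how the original reference \cite{LZ} proceeds. You are also right that the real content lies in the assembly step: one must carefully track how the two applications of $\mathbf{(N4\mbox{-}1)}$ (to $A_\bullet\to\mathcal{P}$ and to $\mathcal{U}\to\mathcal{P}$) combine with $\mathcal{U}$ and with the bottom angle $B_\bullet$ to rebuild the full cone of $\varphi$, and the sign conventions coming from $\mathbf{(N2)}$ require attention. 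Your proposal is a sketch rather than a proof at that point, but the architecture is sound.
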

\subsection{Mutation pairs in $(d+2)$-angulated categories}
In this subsection, we recall the concept of mutation pairs in
$(d+2)$-angulated categories from \cite{L}.

Let $\mathcal{C}$ be an additive category and $\mathcal{D}$ a subcategory of $\mathcal{C}$. A morphism $f:X\rightarrow Y$ in $\mathcal{C}$ is called $\mathcal{D}$-\emph{monic} if $$\mathcal{C}(Y,D)\xrightarrow{\mathcal{C}(f,D)} \mathcal{C}(X,D)\rightarrow 0$$ is exact for any object $D\in\mathcal{D}$. A morphism $f:X\rightarrow D$ in $\mathcal{C}$ is called a \emph{left} $\mathcal{D}$-\emph{approximation} of $X$ if $f$ is $\mathcal{D}$-monic and $D\in\mathcal{D}$. We can defined $\mathcal{D}$-\emph{epic} morphism and \emph{right} $\mathcal{D}$-\emph{approximation} dually.

\begin{definition}\label{mu}\cite[Definition 3.1]{L}
Let $\mathcal{C}$ be a $(d+2)$-angulated category with a $d$-suspension functor $\Sigma^d$, and $\mathcal{D}\subseteq\mathcal{Z}$ be a subcategory of $\mathcal{C}$. The pair $(\mathcal{Z},\mathcal{Z})$ is called a $\mathcal{D}$-$mutation\ pair$ if it satisfies the following conditions.
\vspace{1mm}

(1) For any object $X\in\mathcal{Z}$, there exists a $(d+2)$-angle $$X\xrightarrow{a_1}D_1\xrightarrow{a_2}D_2\xrightarrow{a_3}\cdots\xrightarrow{a_{d}}D_{d}\xrightarrow{a_{d+1}}Y\xrightarrow{a_{d+2}}\Sigma^d X$$
where $D_i\in\mathcal{D}, Y\in\mathcal{Z}, a_1$ is a left $\mathcal{D}$-approximation and $a_{d+1}$ is a right $\mathcal{D}$-approximation.
\vspace{1mm}

(2) For any object $Y\in\mathcal{Z}$, there exists a $(d+2)$-angle $$X\xrightarrow{c_1}D_1\xrightarrow{c_2}D_2\xrightarrow{c_3}\cdots\xrightarrow{c_{d}}D_{d}\xrightarrow{c_{d+1}}Y\xrightarrow{c_{d+2}}\Sigma^d X$$
where $X\in\mathcal{Z}, D_i\in\mathcal{D}, c_1$ is a left $\mathcal{D}$-approximation and $c_{d+1}$ is a right $\mathcal{D}$-approximation.
\end{definition}

Note that if $d=1$, then $\mathcal{C}$ is a triangulated category, and moreover if $\mathcal{D}$ is a rigid subcategory, then a $\mathcal{D}$-mutation pair is just the same as Iyama-Yoshino's definition \cite[Definition 2.5]{IY}.

For a $\mathcal{D}$-mutation pair $(\mathcal{Z},\mathcal{Z})$ in a $(d+2)$-angulated category $\mathcal{C}$, the quotient category $\mathfrak{U}:=\Z/\D$ is called a \emph{subfactor} \emph{category} which is defined by the following data.
\begin{itemize}
  \item [(1)] The objects in $\mathfrak{U}$ are the same as  $\Z$.
  \item [(2)] The morphism space $\Hom_{\mathfrak{U}}(X,Y)$ is defined as
  $$\Hom_{\mathfrak{U}}(X,Y):=\Hom_{\Z}(X,Y)/[\D](X,Y)$$
  for each $X,Y\in\Z$, where $[\D](X,Y)$ is the subspace of $\Hom_{\Z}(X,Y)$ consisting of morphisms factoring through objects in $\D$.
\end{itemize}

It is proved in \cite{L} that $\mathfrak{U}$ carries a natural $(d+2)$-angulated structure inherited from the $(d+2)$-angulated structure of $\C$ as follows.
\begin{itemize}
  \item[$\bullet$] For any object $X\in\mathfrak{U}$, choose a $(d+2)$-angle $$X\xrightarrow{a_1}D_1\xrightarrow{a_2}D_2\xrightarrow{a_3}\cdots\xrightarrow{a_{d}}D_{d}\xrightarrow{a_{d+1}}Y\xrightarrow{a_{d+2}}\Sigma^d X$$
with $D_i\in\mathcal{D}, Y\in\mathcal{Z}, a_1$ is a left $\mathcal{D}$-approximation  and $a_{d+1}$ is a right $\mathcal{D}$-approximation. Then the $d$-shift of $X$ in $\mathfrak{U}$ is defined to be $Y$, denoted by $T^dX$.
  \item[$\bullet$] For any $(d+2)$-angle  $$X_1\xrightarrow{f_1}X_2\xrightarrow{f_2}X_3\xrightarrow{f_3}\cdots\xrightarrow{f_{d+1}}X_{d+2}\xrightarrow{f_n}\Sigma^d X_1$$
in $\C$ with $X_i\in\Z$ for $i=1,\cdots,d+2$, and $f_1$ is $\D$-monic, there is a commutative diagram of $(d+2)$-angles

$$
\xymatrix{
X_1 \ar[r]^{f_1}\ar@{=}[d] & X_2 \ar[r]^{f_2}\ar[d]^{c_2} & X_3 \ar[r]^{f_3}\ar[d]^{c_3} & \cdots \ar[r]^{f_{d}} & X_{d+1} \ar[r]^{f_{d+1}}\ar[d]^{c_{d+1}}& X_{d+2} \ar[r]^{f_{d+2}}\ar[d]^{c_{d+2}} & \Sigma^d X_1 \ar@{=}[d] \\
X_1 \ar[r]^{a_1} & D_1 \ar[r]^{a_2} & D_2 \ar[r]^{a_3} & \cdots \ar[r]^{a_{d}} & D_{d} \ar[r]^{a_{d+1}}& T^dX_1 \ar[r]^{a_{d+2}} & \Sigma^d X_1. \\
}
$$
The $(d+2)$-$T^d$-sequence $$X_1\xrightarrow{\underline{f_1}}X_2\xrightarrow{\underline{f_2}}X_3\xrightarrow{\underline{f_3}}\cdots\xrightarrow{\underline{f_{d+1}}}
X_{d+2}\xrightarrow{\underline{c_{d+2}}} T^dX_1$$ is called a $standard\ (d+2)$-$angle$ in $\mathcal{Z}/\mathcal{D}$.
The class $\Phi$ is defined as the collection of
$(d+2)$-$T^d$-sequences that are isomorphic to standard
$(d+2)$-angles.
\end{itemize}

\begin{definition}
Let $\mathcal{C}$ be a $(d+2)$-angulated category with a $d$-suspension functor $\Sigma^d$.
 A subcategory $\mathcal{Z}$ is called extension closed if for any
 morphism $f_{d+2}\colon X_{d+2}\to \Sigma^d X_1$ with $X_1,X_{d+2}\in\Z$,
 there exists  a $(d+2)$-angle
$$X_1\xrightarrow{f_1}X_2\xrightarrow{f_2}X_3\xrightarrow{f_3}
\cdots\xrightarrow{f_{d}}X_{d+1}\xrightarrow{f_{d+1}}X_{d+2}\xrightarrow{f_{d+2}}\Sigma^d X_1$$ in $\mathcal{C}$ such that $X_2,X_3,\cdots, X_{d+1}\in\mathcal{Z}$.
\end{definition}
\begin{lemma} \cite[Theorem 3.7]{L}\label{lem2}
Let $\mathcal{C}$ be a $(d+2)$-angulated category with a $d$-suspension functor $\Sigma^d$, and $\mathcal{D}\subseteq\mathcal{Z}$ be subcategories of $\mathcal{C}$. If $(\mathcal{Z},\mathcal{Z})$ is a $\mathcal{D}$-mutation pair and $\mathcal{Z}$ is extension closed, then the quotient category ($\mathcal{Z}/\mathcal{D}, T^d,\Phi)$ is a $(d+2)$-angulated category.
\end{lemma}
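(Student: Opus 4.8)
The plan is to verify directly that the triple $(\Z/\D, T^d, \Phi)$ satisfies the four axioms (N1)--(N4) of a $(d+2)$-angulated category, following the template of the Iyama--Yoshino subfactor construction in the triangulated ($d=1$) case but adapted to the higher setting. The overarching idea is that, by the definition of $\Phi$, every candidate $(d+2)$-angle in $\Z/\D$ is isomorphic to a standard angle, which in turn is the image of an honest $(d+2)$-angle in $\C$ whose terms all lie in $\Z$ and whose first morphism is $\D$-monic; so each axiom for $\Z/\D$ is reduced to the corresponding axiom in $\C$ by lifting the data to $\C$, applying the $(d+2)$-angulated structure there, and projecting back. The single technical tool used throughout is the elementary observation that if $f\colon X\to Y$ is $\D$-monic and a morphism $h\colon X\to Z$ factors through an object of $\D$, then $h$ factors through $f$; this lets us rectify lifts of commutative squares so that they commute on the nose in $\C$ rather than merely modulo $[\D]$.

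First I would check that $T^d$ is a well-defined automorphism of $\Z/\D$. Given $X\in\Z$, condition (1) of the mutation pair provides a $(d+2)$-angle with $T^dX:=Y$; two such choices are compared by lifting $1_X$ and applying (N3) in $\C$ together with the approximation properties of $a_1$ and $a_{d+1}$, giving a canonical isomorphism in $\Z/\D$, so $T^d$ is well-defined on objects and extends to a functor. Condition (2) produces, symmetrically, a quasi-inverse $T^{-d}$, so $T^d$ is an autoequivalence. For (N1)(a), closure of $\Phi$ under isomorphism is built into the definition and closure under finite direct sums is immediate by taking termwise direct sums of standard angles; closure under direct summands is the one genuinely delicate point of (N1), which I would handle by the usual splitting argument, writing a summand as the image of an idempotent and realising it on the underlying angle via (N3). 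Axiom (N1)(b) follows by exhibiting the defining angle of $T^dA$ as a morphism into the trivial angle, and (N1)(c) is obtained from a morphism $\underline h\colon X_1\to X_2$ by replacing $h$ with the $\D$-monic morphism $\binom{h}{a_1}\colon X_1\to X_2\oplus D$, where $a_1\colon X_1\to D$ is a left $\D$-approximation (so $\binom{h}{a_1}$ still represents $\underline h$ because $X_2\oplus D\cong X_2$ in $\Z/\D$), extending it to a $(d+2)$-angle in $\C$ and using extension-closedness of $\Z$ to force the intermediate terms into $\Z$.

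For (N2), I would show that the left rotation of a standard angle is again isomorphic to a standard one; this amounts to tracking how $T^d$ and the sign $(-1)^d$ interact with the comparison diagram defining the last morphism $\underline{c_{d+2}}$, and is a bookkeeping argument built on (N3) in $\C$ and the defining angle of $T^dX_1$. Axiom (N3) is then routine: given a solid commutative square in $\Z/\D$ between two standard angles, lift the two given morphisms to $\C$, use the $\D$-monic rectification observation to make the first square commute strictly in $\C$, apply (N3) in $\C$ to fill in the remaining morphisms, and project the result to $\Z/\D$. At this stage $(\Z/\D,T^d,\Phi)$ is pre-$(d+2)$-angulated, so by Lemma \ref{thm1} it suffices to verify (N4-1).

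The main obstacle is (N4-1). Given the two standard angles and the connecting morphism $\varphi_2$ as in the statement of Lemma \ref{thm1}, I would lift everything to $\C$, again rectifying the relevant square to commute strictly, and apply Lemma \ref{thm1} inside $\C$ to obtain the mapping-cone $(d+2)$-angle $X_2\to X_3\oplus Y_2\to\cdots\to Y_{d+2}\to\Sigma^d X_2$ in $\C$. The delicate verifications are then: (i) every term $X_i\oplus Y_{i-1}$ lies in $\Z$, which is clear since $\Z$ is additive; (ii) the first morphism of this angle is $\D$-monic, which I would deduce from the $\D$-monic property of the first morphisms of the two given angles; and (iii) this $\C$-angle projects to a sequence in $\Phi$, for which I must identify its final term with $T^dX_2$ and its last morphism with the appropriate $\underline c$, via the comparison diagram defining $T^d$ together with (N3) in $\C$. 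Getting the identifications in (iii) compatible, and checking that the descended sequence really is isomorphic in $\Z/\D$ to the standard angle built from the rectified $\D$-monic map, is where the bulk of the careful diagram-chasing will lie; once it is done, (N4-1) holds and the proof is complete.
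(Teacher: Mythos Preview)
The paper does not give its own proof of this lemma; it is stated with a citation to \cite[Theorem 3.7]{L} and used as a black box. So there is nothing in the present paper to compare your proposal against.

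That said, your outline is essentially the argument Lin gives in \cite{L}: verify (N1)--(N3) by lifting to $\C$, rectifying squares using $\D$-monicity, and projecting back, then reduce (N4) to (N4-1) via Lemma~\ref{thm1}. One point you should handle more carefully is (N1)(c): after replacing $h$ by the $\D$-monic $\binom{h}{a_1}\colon X_1\to X_2\oplus D_1$ and extending to a $(d+2)$-angle in $\C$, it is not immediate that the remaining terms $X_3,\ldots,X_{d+2}$ lie in $\Z$. The definition of ``extension closed'' here takes as input a morphism $X_{d+2}\to\Sigma^d X_1$ with both ends already in $\Z$, so you cannot invoke it directly on an angle whose far end is not yet known to be in $\Z$. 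In Lin's proof this is circumvented by building the desired angle out of the defining angle of $T^dX_1$ via an (N4)-type argument, which produces an angle whose terms are sums of $X_i$'s and $D_j$'s already known to lie in $\Z$; your sketch of (N4-1) is closer to the correct mechanism than your sketch of (N1)(c). Apart from this, your plan is sound and matches the original source.
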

\section{Cotorsion pairs in $(d+2)$-angulated categories}
In this section, we define cotorsion pairs and weak cotorsion pairs in $(d+2)$-angulated categories. Moreover, we give a geometric characterization of weak cotorsion pairs in $(d+2)$-angulated cluster categories of type $A$.
\vspace{1mm}

From now on, we assume that $\C:=(\C,\Sigma^d,\Theta)$  is  a $(d+2)$-angulated category.
\subsection{Cotorsion pairs}

\begin{definition}
Let $\X$ and $\Y$ be subcategories of $\C$.
\begin{itemize}
  \item [(1)] The pair $(\X, \Y)$ is called a \emph{torsion} \emph{pair}  if $\Hom_{\C}(\X, \Y)=0$ and for any $C\in\C$, there exist two $(d+2)$-angles
    $$X\rightarrow C\rightarrow Y_1\rightarrow \cdots\rightarrow Y_d\rightarrow \Sigma^d X$$
   and
    $$\Sigma^{-d} Y\rightarrow X_1\rightarrow\cdots\rightarrow X_d \rightarrow C\rightarrow Y$$
     with $X_i\in\X$, $Y_i\in\Y$ for $i=1,\cdots,d$, and $X\in\X$, $Y\in\Y$.
\vspace{1mm}

\item [(2)] The pair $(\X, \Y)$ is called a  \emph{cotorsion} \emph{pair}  if $\Hom_{\C}(\X, \Sigma^{d}\Y)=0$ and for any $C\in\C$, there exist two $(d+2)$-angles
    $$\Sigma^{-d}X\rightarrow C\rightarrow Y_1\rightarrow \cdots\rightarrow Y_d\rightarrow X$$
 and
    $$ Y\rightarrow X_1\rightarrow\cdots\rightarrow X_d \rightarrow C\rightarrow\Sigma^{d} Y$$
   with $X_i\in\X$, $Y_i\in\Y$ for $i=1,\cdots,d$, and $X\in\X$, $Y\in\Y$.
   \vspace{1mm}

 \item [(3)] The pair $(\X, \Y)$ is called a \emph{weak} \emph{cotorsion} \emph{pair} if $\X$ is contravariantly finite, $\Y$ is covariantly finite, $\Y=(\Sigma^{-d}\X){^\bot}$ and  $\X={^\bot}(\Sigma^{d}\Y)$.
\end{itemize}
\end{definition}

\begin{remark}\label{remark3}
By definition, we know that $(\X,\Y)$ is a cotorsion pair if and only if
$(\X,\Sigma^{d}\Y)$ is a torsion pair. Therefore, the concepts of cotorsion pairs and torsion pairs are equivalent in this context. This is consistent with the case in triangulated categories.
\end{remark}

We recall the following fact which is needed in the sequel.

\begin{lemma}\emph{\cite[Lemma 3.13]{F}}\label{y1}
Let $\C$ be a $(d+2)$-angulated category, and
$$A_0\xrightarrow{\alpha_0}A_1\xrightarrow{\alpha_1}A_2\xrightarrow{\alpha_2}\cdots\xrightarrow{\alpha_{d-1}}A_d\xrightarrow{\alpha_d}A_{d+1}\xrightarrow{\alpha_{d+1}}\Sigma^d A_0$$
be a $(d+2)$-angle in $\C$. Then the following statements are equivalent:
\begin{itemize}
\item[\rm (1)] $\alpha_0$ is a section;
\item[\rm (2)] $\alpha_d$ is a retraction;
\item[\rm (3)] $\alpha_{d+1}=0$.
\end{itemize}
\end{lemma}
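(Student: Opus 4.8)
The plan is to reduce everything to the homological behaviour of representable functors: in any $(d+2)$-angulated category, each covariant functor $\C(W,-)$ and each contravariant functor $\C(-,W)$ carries a $(d+2)$-angle (and all of its rotations) to a long exact sequence of abelian groups. I will take this standard consequence of \textbf{(N1)}--\textbf{(N4)} as the main input. Granting it, the three conditions split into two dual one-step arguments: I would prove $(1)\Leftrightarrow(3)$ by applying $\C(-,A_0)$, prove $(2)\Leftrightarrow(3)$ by applying $\C(A_{d+1},-)$, and then get $(1)\Leftrightarrow(2)$ for free.

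For $(1)\Leftrightarrow(3)$, I would first use \textbf{(N2)} to rewrite the given angle by its inverse rotation, so that the connecting morphism appears immediately to the left of $\alpha_0$ as a map $\beta\colon\Sigma^{-d}A_{d+1}\to A_0$ with $\beta=\pm\,\Sigma^{-d}\alpha_{d+1}$. Applying $\C(-,A_0)$ then yields exactness of
$$\C(A_1,A_0)\xrightarrow{-\circ\alpha_0}\C(A_0,A_0)\xrightarrow{-\circ\beta}\C(\Sigma^{-d}A_{d+1},A_0).$$
If $\alpha_{d+1}=0$ then $\beta=0$, so $-\circ\beta$ is the zero map and $1_{A_0}$ lies in its kernel, hence in the image of $-\circ\alpha_0$; any preimage $r$ satisfies $r\circ\alpha_0=1_{A_0}$, so $\alpha_0$ is a section, giving $(3)\Rightarrow(1)$. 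Conversely, if $\alpha_0$ is a section with retraction $r$, then $1_{A_0}=r\circ\alpha_0$ lies in the image of $-\circ\alpha_0$, hence in the kernel of $-\circ\beta$, so $\beta=1_{A_0}\circ\beta=0$; since $\Sigma^{-d}$ is an automorphism this forces $\alpha_{d+1}=0$, giving $(1)\Rightarrow(3)$.

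The equivalence $(2)\Leftrightarrow(3)$ is entirely dual: applying the covariant functor $\C(A_{d+1},-)$ produces the exact sequence
$$\C(A_{d+1},A_d)\xrightarrow{\alpha_d\circ-}\C(A_{d+1},A_{d+1})\xrightarrow{\alpha_{d+1}\circ-}\C(A_{d+1},\Sigma^d A_0),$$
and the same bookkeeping with $1_{A_{d+1}}$ shows that $\alpha_{d+1}=0$ holds if and only if $1_{A_{d+1}}$ factors through $\alpha_d$, that is, if and only if $\alpha_d$ is a retraction. Combining the two equivalences gives $(1)\Leftrightarrow(2)\Leftrightarrow(3)$.

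I expect no genuine difficulty in the diagram chase once the long exact sequences are available; the only delicate points are the sign bookkeeping and the role of the automorphism $\Sigma^{-d}$ when locating the connecting morphism $\beta$ via \textbf{(N2)}. The real technical heart, if one wants a fully self-contained treatment rather than citing it, is the homological property itself, namely establishing exactness of the Hom-sequences at the two spots actually used (at $\C(A_0,A_0)$ for $\C(-,A_0)$ and at $\C(A_{d+1},A_{d+1})$ for $\C(A_{d+1},-)$); this is the standard lemma for $(d+2)$-angulated categories, proved by comparing the given angle with the trivial angle $A_0\xrightarrow{1_{A_0}}A_0\to 0\to\cdots\to\Sigma^d A_0$ through \textbf{(N3)}.
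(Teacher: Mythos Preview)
Your argument is correct. The paper does not supply its own proof of this lemma: it merely recalls the statement from \cite[Lemma 3.13]{F} and uses it as a black box. Your approach via the long exact Hom-sequences associated to a $(d+2)$-angle and its rotation is the standard one (this exactness is established in \cite[Proposition~2.5]{GKO}), and the diagram chase you outline is exactly what one finds in Fedele's paper. There is nothing to compare here beyond noting that you have filled in what the paper leaves as a citation.
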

\begin{proposition}
Let $(\X,\Y)$ be a cotorsion pair in $\C$. Then the following statements hold
\begin{itemize}
\item[\rm (1)] $\X$ is contravariantly finite;
\item[\rm (2)] $\Y$ is covariantly finite;
\item[\rm (3)] $\X={^\bot}(\Sigma^{d}\Y)$;
\item[\rm (4)] $\Y=(\Sigma^{-d}\X){^\bot}$.
\end{itemize}
\end{proposition}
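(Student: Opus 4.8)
The plan is to extract everything directly from the two defining $(d+2)$-angles attached to an arbitrary object, using only the orthogonality $\Hom_\C(\X,\Sigma^d\Y)=0$, the natural isomorphisms $\Hom_\C(\Sigma^{-d}A,B)\cong\Hom_\C(A,\Sigma^dB)$ (valid because $\Sigma^d$ is an automorphism), and Lemma \ref{y1}. For an object $C\in\C$ I fix the two angles
\begin{gather*}
\Sigma^{-d}X\xrightarrow{\,u\,}C\xrightarrow{\,p\,}Y_1\to\cdots\to Y_d\xrightarrow{\,w\,}X,\\
Y\to X_1\to\cdots\to X_d\xrightarrow{\,g\,}C\xrightarrow{\,h\,}\Sigma^dY,
\end{gather*}
with $X,X_i\in\X$ and $Y,Y_i\in\Y$ coming from the definition of a cotorsion pair. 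Claims (1),(2) will be read off from the maps $g$ and $p$, and (3),(4) from the extreme maps $h$ and $u$.

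For (1) I will show that $g\colon X_d\to C$ is a right $\X$-approximation. Given any $\varphi\colon X'\to C$ with $X'\in\X$, the composite $h\circ\varphi$ lies in $\Hom_\C(\X,\Sigma^d\Y)=0$, hence $h\circ\varphi=0$; since $\Hom_\C(X',-)$ is homological, $g$ is a weak kernel of $h$, so $\varphi$ factors through $g$. As $X_d\in\X$, this makes $g$ a right $\X$-approximation and $\X$ contravariantly finite. Part (2) is dual: for $\psi\colon C\to Y'$ with $Y'\in\Y$ the composite $\psi\circ u$ lies in $\Hom_\C(\Sigma^{-d}X,Y')\cong\Hom_\C(X,\Sigma^dY')=0$, so $\psi\circ u=0$, and applying $\Hom_\C(-,Y')$ shows that $\psi$ factors through $p\colon C\to Y_1$; thus $p$ is a left $\Y$-approximation and $\Y$ is covariantly finite.

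For (3), the inclusion $\X\subseteq{}^\perp(\Sigma^d\Y)$ is just $\Hom_\C(\X,\Sigma^d\Y)=0$. For the reverse, take $M\in{}^\perp(\Sigma^d\Y)$ and apply the second angle to $C=M$: its last map $h\colon M\to\Sigma^dY$ lies in $\Hom_\C(M,\Sigma^d\Y)=0$, so $h=0$, and Lemma \ref{y1} forces $g\colon X_d\to M$ to be a retraction. Then $M$ is a direct summand of $X_d\in\X$, and since $\X$ is closed under summands, $M\in\X$. For (4), the inclusion $\Y\subseteq(\Sigma^{-d}\X)^\perp$ follows from $\Hom_\C(\Sigma^{-d}\X,\Y)\cong\Hom_\C(\X,\Sigma^d\Y)=0$. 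Conversely, for $M\in(\Sigma^{-d}\X)^\perp$ I apply the first angle to $C=M$: now the vanishing map $u\colon\Sigma^{-d}X\to M$ sits at the front, so I first left-rotate the angle via {\bf (N2)} to
\[
M\xrightarrow{\,p\,}Y_1\to\cdots\to Y_d\xrightarrow{\,w\,}X\xrightarrow{(-1)^d\Sigma^d u}\Sigma^dM,
\]
whose final map is $(-1)^d\Sigma^d u=0$; Lemma \ref{y1} then makes $p\colon M\to Y_1$ a section, so $M$ is a summand of $Y_1\in\Y$ and $M\in\Y$.

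The only genuine subtlety is this last point: in (4) the hypothesis annihilates the \emph{first} map of the relevant angle, whereas Lemma \ref{y1} is phrased through the \emph{last} map, so the rotation supplied by {\bf (N2)} is exactly what lines things up (in (3) no rotation is needed, as $h$ is already the terminal map). Beyond that the argument is routine, provided one consistently uses the $\Sigma^d$-adjunction isomorphisms and the homological behaviour of $\Hom_\C(W,-)$ and $\Hom_\C(-,W)$ on $(d+2)$-angles.
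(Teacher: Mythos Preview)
Your proof is correct and follows essentially the same approach as the paper: extract right/left approximations from the two defining $(d+2)$-angles using the orthogonality $\Hom_\C(\X,\Sigma^d\Y)=0$, and establish the perpendicularity equalities by showing the relevant map in the angle vanishes and then invoking Lemma~\ref{y1} to split off a summand. The only cosmetic difference is the choice of angle: the paper proves (1) and (3) using the first angle applied to $\Sigma^{-d}C$ (so that the map $X\to C$ appears after shifting), whereas you use the second angle for (1) and (3) and the first for (2) and (4); your choice makes the application of Lemma~\ref{y1} slightly more direct, since the vanishing map already sits in position $\alpha_{d+1}$ (or does after a single rotation), but the substance of the argument is the same.
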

\begin{proof}
We prove the statements (1) and (3), (2) and (4) can be proved similarly. Since $(\X,\Y)$ is a cotorsion pair, for arbitrary object $\Sigma^{-d}C\in\C$, there exists a $(d+2)$-angle
   $$\Sigma^{-d}X\xrightarrow{\Sigma^{-d}f} \Sigma^{-d}C\rightarrow Y_1\rightarrow \cdots\rightarrow Y_d\rightarrow X$$
with $X\in\X$ and $Y_i\in\Y$ for $i=1,\cdots,d$. Applying $\Hom(\X,-)$ to the $(d+2)$-angle above, we obtain the exact sequence
$$\Hom(\X,X)\xrightarrow{\Hom(\X,f)}\Hom(\X,C)\rightarrow 0$$
since $\Hom_{\C}(\X, \Sigma^{d}\Y)=0$. Thus $f$ is a left $\X$-approximation of $C$ and then $\X$ is contravariantly finite.

Now we give the proof of (3). Since $\Hom_{\C}(\X, \Sigma^{d}\Y)=0$, $\X\subset{^\bot}(\Sigma^{d}\Y)$. It is enough to show ${^\bot}(\Sigma^{d}\Y)\subset\X$. For any object $Z\in{^\bot}(\Sigma^{d}\Y)$, there exists a $(d+2)$-angle
   $$\Sigma^{-d}X\xrightarrow{g} \Sigma^{-d}Z\xrightarrow{h} Y_1\rightarrow \cdots\rightarrow Y_d\rightarrow X$$
with $X\in\X$ and $Y_i\in\Y$ for $i=1,\cdots,d$. Note that $h=0$, then $g$ is a retraction by Lemma \ref{y1}. So $Z$ is a direct summand of $X$, and we obtain that $\Z\in\X$.
\end{proof}

This proposition immediately yields the following conclusion.

\begin{corollary}
Let $\X$ and $\Y$ be subcategories of $\C$. If $(\X,\Y)$ is a cotorsion pair in $\C$, then $(\X,\Y)$ is a weak cotorsion pair in $\C$.
\end{corollary}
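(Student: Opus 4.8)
The goal is to show that every cotorsion pair $(\X,\Y)$ is a weak cotorsion pair, i.e.\ that the four conditions packaged in the definition of weak cotorsion pair hold: $\X$ is contravariantly finite, $\Y$ is covariantly finite, $\Y=(\Sigma^{-d}\X)^\bot$, and $\X={^\bot}(\Sigma^{d}\Y)$. The crucial observation is that the \emph{immediately preceding proposition} already establishes exactly these four statements, with item (1) giving contravariant finiteness of $\X$, item (2) giving covariant finiteness of $\Y$, item (3) giving $\X={^\bot}(\Sigma^{d}\Y)$, and item (4) giving $\Y=(\Sigma^{-d}\X)^\bot$. So the plan is simply to invoke that proposition and read off that all four defining conditions of a weak cotorsion pair are satisfied.

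The only real content of the argument is therefore bookkeeping: I would state that since $(\X,\Y)$ is a cotorsion pair, the Proposition applies and yields each of the four required properties. There is nothing further to prove because the definition of weak cotorsion pair is the exact conjunction of these four conditions. I expect no genuine obstacle here; the single point worth flagging is to make sure the matching between the proposition's four items and the definition's four clauses is literally term-by-term (in particular that the perp/coperp equalities are stated with the correct shifts $\Sigma^{d}$ and $\Sigma^{-d}$ on the correct sides), so that the reader sees the corollary is a direct restatement rather than requiring any additional verification.

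\begin{proof}
By the preceding proposition, if $(\X,\Y)$ is a cotorsion pair in $\C$, then $\X$ is contravariantly finite, $\Y$ is covariantly finite, $\X={^\bot}(\Sigma^{d}\Y)$ and $\Y=(\Sigma^{-d}\X)^\bot$. These are precisely the four conditions in the definition of a weak cotorsion pair. Hence $(\X,\Y)$ is a weak cotorsion pair in $\C$.
\end{proof}
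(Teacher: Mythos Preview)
Your proposal is correct and matches the paper's approach exactly: the paper simply remarks that ``this proposition immediately yields the following conclusion'' and states the corollary without further argument, which is precisely your invocation of the preceding proposition to verify the four defining conditions of a weak cotorsion pair.
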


\begin{remark}
When $d=1$, $\C$ is a triangulated category, the concepts of cotorsion pair and weak cotorsion pair are concide. However, for $d\geq 2$, they are not the same in general.
\end{remark}

Before we give examples of cotorsion pairs in $\C$, we recall the following notion.

\begin{definition}\cite[Definition 0.3]{JJ1} and \cite[Definition 1.1]{ZZ1}
Let $\X$ be a subcategory of $\C$. The subcategory $\X$ is called {\em cluster tilting in $\C$} if:
\begin{enumerate}
\setlength\itemsep{4pt}

  \item $\X$ is $d$-rigid, i.e. $\Hom_{\C}(\X,\Sigma^d\X)=0$.

  \item For any \(C\in\X\) there exists a \((d+2)\)-angle
\[
  X_d\rightarrow \cdots \rightarrow X_0\rightarrow C\rightarrow \Sigma^d X_d
\]
with \(X_i\in\X\) for all \(0\leq i\leq d\).
\end{enumerate}
An object \(X\in\C\) is called {\em Oppermann--Thomas cluster tilting in $\C$} \cite[Definition.5.3]{OT} if $\add X$ is cluster tilting.
\end{definition}

\begin{lemma}
Let $\X$ be a subcategory of $\C$. Then $(\X,\X)$ is a cotorsion pair if and only if $\X$ is cluster tilting.
\end{lemma}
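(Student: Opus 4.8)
The statement to prove is that $(\X,\X)$ is a cotorsion pair if and only if $\X$ is cluster tilting. The plan is to unwind both definitions and match the two defining conditions of a cotorsion pair against the two defining conditions of a cluster tilting subcategory. Recall that a cotorsion pair $(\X,\X)$ requires two things: first, the Hom-vanishing $\Hom_{\C}(\X,\Sigma^d\X)=0$; and second, the existence, for every $C\in\C$, of two $(d+2)$-angles of the prescribed shape with end terms in $\X$ and middle terms in $\X$. On the cluster tilting side we also have a Hom-vanishing condition (which is literally the $d$-rigidity $\Hom_{\C}(\X,\Sigma^d\X)=0$, identical to the cotorsion condition) and a resolution condition phrased only for objects $C\in\X$ (not all of $\C$). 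So the rigidity conditions coincide verbatim, and the real content is to reconcile the two resolution axioms.

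\textbf{Proof of the forward direction.} First I would assume $(\X,\X)$ is a cotorsion pair. The rigidity $\Hom_{\C}(\X,\Sigma^d\X)=0$ is immediate, giving condition (1) of cluster tilting. For condition (2), I would take any $C\in\X$ and apply the second $(d+2)$-angle in the cotorsion definition, namely
$$Y\rightarrow X_1\rightarrow\cdots\rightarrow X_d\rightarrow C\rightarrow\Sigma^d Y$$
with $Y\in\X$ and $X_i\in\X$. Since all terms $X_1,\dots,X_d$ and the object $Y$ lie in $\X$, after relabelling (set $X_d:=Y$ and reindex) this is precisely a $(d+2)$-angle $X_d\rightarrow\cdots\rightarrow X_0\rightarrow C\rightarrow\Sigma^d X_d$ with all $X_i\in\X$, which is exactly condition (2) of cluster tilting. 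Hence $\X$ is cluster tilting.

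\textbf{Proof of the converse.} Now I would assume $\X$ is cluster tilting and verify the cotorsion axioms. Again rigidity transfers directly, so $\Hom_{\C}(\X,\Sigma^d\X)=0$ holds. The genuine work is upgrading the cluster-tilting resolution, which is only given for $C\in\X$, to a resolution of an arbitrary object $C\in\C$. The standard mechanism is as follows. Given arbitrary $C\in\C$, one first takes a right $\X$-approximation (produced by extending a suitable morphism into a $(d+2)$-angle and using rigidity to control the connecting term), then iterates, peeling off one layer at a time using axioms (N1)(c) and (N2) to rotate and extend, and invokes the $d$-rigidity at each stage to kill the relevant $\Ext$-type obstruction so that the approximation maps are genuinely $\X$-epic/$\X$-monic. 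Assembling the $d$ successive $(d+2)$-angles into a single one of the required length is the delicate bookkeeping step; here I expect to use the octahedral-type axiom in the form of Lemma \ref{thm1} (the {\bf (N4\mbox{-}1)} reformulation) to splice consecutive $(d+2)$-angles together. Producing both cotorsion $(d+2)$-angles (the $\Sigma^{-d}X$-version and the $\Sigma^d Y$-version) requires running this construction once covariantly and once contravariantly, invoking the defining resolution of cluster tilting in each direction.

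\textbf{Main obstacle.} The essential difficulty is not the Hom-vanishing (which is tautologically the same on both sides) but the passage from the weak resolution condition --- defined only on objects of $\X$ --- to the strong one defined on all of $\C$, together with splicing $d$ short $(d+2)$-angles into one long $(d+2)$-angle of the exact shape demanded by the cotorsion definition. I expect the proof to lean on the reformulation of {\bf (N4)} in Lemma \ref{thm1} to perform this gluing, and on the rigidity hypothesis to guarantee that each approximation is an honest $\X$-(co)approximation so that the connecting maps vanish where required.
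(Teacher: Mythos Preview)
Your plan hinges on reading condition (2) of the cluster tilting definition as ranging only over $C\in\X$, and you then propose an iterated-approximation argument to bootstrap to all of $\C$. That quantifier is a typo in the paper: the intended (and cited, cf.\ \cite{JJ1,ZZ1,OT}) definition is ``for any $C\in\C$''. This is not a minor point. If the resolution were only required for $C\in\X$, the condition would be vacuous---for $C\in\X$ take $X_0=C$, $X_1=\cdots=X_d=0$ and use the trivial $(d+2)$-angle coming from (N1)(b) and rotation---so cluster tilting would collapse to mere $d$-rigidity, the lemma would be false, and your bootstrap could not even start: there is nothing in the hypotheses to produce the first right $\X$-approximation of an arbitrary $C\in\C$.

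With the corrected quantifier the proof is as immediate as the paper says. The rigidity conditions coincide verbatim. The second cotorsion $(d+2)$-angle $Y\to X_1\to\cdots\to X_d\to C\to\Sigma^d Y$ is, after relabelling, exactly the cluster tilting resolution of $C$. The first cotorsion $(d+2)$-angle comes from applying that same resolution to $\Sigma^d C$ and rotating once: $X_d\to\cdots\to X_0\to\Sigma^d C\to\Sigma^d X_d$ becomes $C\to X_d\to\cdots\to X_0\to\Sigma^d C$, which (after one more backward rotation) has the required form $\Sigma^{-d}X_0\to C\to X_d\to\cdots\to X_1\to X_0$. No splicing via Lemma~\ref{thm1} is needed. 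Your forward direction is fine; the elaborate converse is aimed at a gap that disappears once the typo is fixed.
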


\begin{proof}
It is easy to check by the definitions of cotorsion pair and cluster tilting.
\end{proof}

Note that the definition of a cotorsion pair in
$\C$ is quite strict, making it challenging to find cotorsion pairs in
$\C$. However, there are many examples of weak cotorsion pairs, as shown in the following examples.

\begin{example}
Let $d = 3$ and $\cT = \cO_{ A_2^3 }$.  This is the $5$-angulated (higher) cluster category of type $A_2$, see \cite{OT,JJ}.  The indecomposable objects can be identified with the elements of the set
\[
  {}^{ \circlearrowleft }\mbox{\bf I}^3_9
  =
  \{\, 1357,1358,1368,1468,2468,2469,2479,2579,3579 \,\},
\]
see \cite[Section \ 8]{OT}.  The AR quiver of $\cT$ is shown in Figure \ref{fig:AR_quiver}.
\begin{figure}[h]
\begin{tikzpicture}[scale=3]
  \node at (0:1.0){$1357$};
  \draw[->] (10:1.0) arc (10:30:1.0);
  \node at (40:1.0){$1358$};
  \draw[->] (50:1.0) arc (50:68:1.0);
  \node at (80:1.0){$1368$};
  \draw[->] (91:1.0) arc (91:108:1.0);
  \node at (120:1.0){$1468$};
  \draw[->] (131:1.0) arc (131:150:1.0);
  \node at (160:1.0){$2468$};
  \draw[->] (170:1.0) arc (170:190:1.0);
  \node at (200:1.0){$2469$};
  \draw[->] (210:1.0) arc (210:227:1.0);
  \node at (240:1.0){$2479$};
  \draw[->] (251:1.0) arc (251:268:1.0);
  \node at (280:1.0){$2579$};
  \draw[->] (295:1.0) arc (295:310:1.0);
  \node at (320:1.0){$3579$};
  \draw[->] (330:1.0) arc (330:351:1.0);
\end{tikzpicture}
\caption{The AR quiver of the $5$-angulated category $\cT$}
\label{fig:AR_quiver}
\end{figure}
By \cite[Theorem 5.5 and Section 8]{OT}, the object
\[
  T = 1357 \oplus 1358 \oplus 1368 \oplus 1468
\]
is Oppermann--Thomas cluster tilting.  So $(\add T,\add T)$ is a cotorsion pair in $\cT$. Moreover, the object
\[
  M = 1357 \oplus 1468 \oplus 2479
\]
is a maximal 3-rigid objects of $\cT$ by \cite[Section 4]{JJ}, but not cluster tilting, so $(\add M,\add M)$ is a weak cotorsion pair, but not a cotorsion pair in $\cT$.
\end{example}
\begin{example}
Let $\T=D^b(kQ)/\tau^{-1}[1]$ be the cluster category of type $A_3$, where $Q$ is the quiver
$1\rightarrow 2\rightarrow 3$, $D^b(kQ)$ is the bounded derived category of finite generated modules over $kQ$, see \cite[Example 4.1]{L}. Then $\T$ is a triangulated category. The AR-quiver of $\T$ is given by Figure \ref{2}.
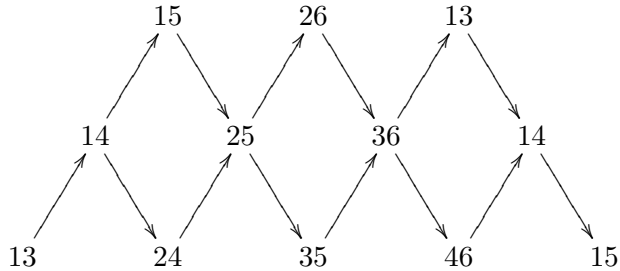
\begin{figure}[h]
\centering

$$
{
\xymatrix@-4mm@C-0.1cm{
     &&15  \ar[rdd] & & 26  \ar[rdd]&& 13 \ar[rdd]\\
 \\
 & 14 \ar[rdd] \ar[ruu] & & 25 \ar[rdd]\ar[ruu] & & 36 \ar[rdd] \ar[ruu] & & 14 \ar[rdd]\\
 \\
  13 \ar[ruu]&& 24 \ar[ruu] && 35\ar[ruu]&&46 \ar[ruu]&& 15 \\
}
}
$$
\caption{The AR-quiver of $\T$}
\label{2}
\end{figure}

Let $\C=\add(13\oplus15\oplus35)$. Then one can check $\C$ is a 2-cluster tilting subcategory of $\T$ and $\C[2]=\C$. Thus $(\C,[2])$ is a 4-angulated category. The AR quiver of $\C$ is shown in Figure \ref{cc}.
\begin{figure}[h]
\begin{tikzpicture}[scale=3]
  \node at (0:1.0){$13$};
  \draw[->] (10:1.0) arc (10:110:1.0);
  \node at (120:1.0){$15$};
  \draw[->] (130:1.0) arc (130:230:1.0);
  \node at (240:1.0){$35$};
  \draw[->] (250:1.0) arc (250:350:1.0);
\end{tikzpicture}
\caption{The AR quiver of the 4-angulated category $\C$}
\label{cc}
\end{figure}
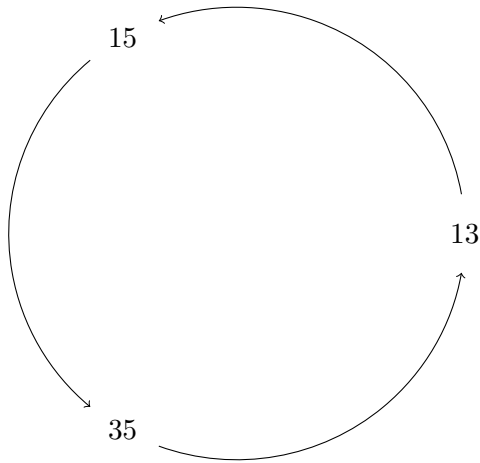
The cotorsion pairs we can find are only  $(\C, 0)$ and $(0, \C)$.
\end{example}

\subsection{Weak cotorsion pairs in  $(d+2)$-angulated cluster categories of type $A$}
In this subsection, we give a geometric characterization of weak cotorsion pairs in $(d+2)$-angulated cluster categories of type $A$, which was introduced in \cite[Section 6]{OT}. First, we briefly review some definitions and related results.

Iyama introduced higher dimensional analogues of Auslander-Reiten theory and Auslander algebras (see, for instance \cite{Iya}), generalizing several classical concepts from the representation theory of finite dimensional algebras. In \cite{IO}, the notion of $d$-representation finiteness was introduced as an ideal framework for studying these concepts. (See also \cite{Iya, IO2, HI})

In this subsection, we focus on the most well-understood class of $d$-representation finite algebras: the $(d-1)$-th higher Auslander algebras of linearly oriented $A_n$ (see \cite{Iya,IO}), which are denoted by $A_n^d$. The case when $d=1$ is already well understood. In this case, the algebra $A_n^1$ is the path algebra of linearly ordered $A_n$.

We recall a higher dimensional cluster category, denoted $\mathscr{O}_{\Lambda}$,  for any $d$-representation finite algebra $\Lambda$, as introduced in \cite{OT}.
 The cluster category $\mathscr{O}_{\Lambda}$ is constructed as a subcategory of the $2d$-Amiot cluster category $\mathscr{C}_{\Lambda}^{2d}$ in \cite{A}. The $2d$-Amiot cluster categories generalize classical $2d$-cluster categories to non-hereditary algebras $\Lambda$. In particular, the categories $\mathscr{C}_{\Lambda}^{2d}$ are $2d$-Calabi-Yau and triangulated. These properties of $\mathscr{C}_{\Lambda}^{2d}$ are used to show that $\mathscr{O}_{\Lambda}$ is $(d+2)$-angulated, and also satisfies a certain Calabi-Yau property. It is worth noting that for $d=1$, the categories $\mathscr{O}_{\Lambda}$ and $\mathscr{C}_{\Lambda}^2$, and both coincide with the classical cluster category of the hereditary representation finite algebra $\Lambda$. For all $d$-representation finite algebras $\Lambda$, the category $\mathscr{O}_{\Lambda}$ contains only finitely many indecomposable objects, which can be arranged in a $d$-dimensional analogue of an Auslander-Reiten quiver.

For the case $\Lambda = A_n^d$, we obtain the $(d+2)$-angulated cluster categories of type $A$, denoted by $\mathscr{O}_{A_n^d}$. The following property is easy to verify based on the construction of $\mathscr{O}_{A_n^d}$.

\begin{lemma}
Let $\X,\Y$ be two subcategories of $\mathscr{O}_{A_n^d}$. Then we have an isomorphism
$$\Hom_{\mathscr{O}_{A_n^d}}(\X,\Sigma^{d}\Y)\cong\Hom_{\mathscr{O}_{A_n^d}}(\Y,\Sigma^{d}\X)$$
\end{lemma}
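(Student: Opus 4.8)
The plan is to deduce the asserted symmetry from the $2d$-Calabi--Yau property of the ambient triangulated category. Recall from \cite{OT,A} that $\mathscr{O}_{A_n^d}$ is realized as a $d$-cluster tilting subcategory of the $2d$-Amiot cluster category $\mathscr{C}_{\Lambda}^{2d}$ (with $\Lambda=A_n^d$), that $\mathscr{C}_{\Lambda}^{2d}$ is $2d$-Calabi--Yau, and that $\mathscr{O}_{A_n^d}$ is closed under the functor $\Sigma^d$, where $\Sigma$ denotes the shift of $\mathscr{C}_{\Lambda}^{2d}$; moreover the $d$-suspension of the $(d+2)$-angulated category $\mathscr{O}_{A_n^d}$ is precisely the restriction of $\Sigma^d$, as in the Geiss--Keller--Oppermann construction recalled in Example~\ref{def}. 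Since $\mathscr{O}_{A_n^d}$ is a full subcategory, for objects $X,Y\in\mathscr{O}_{A_n^d}$ one has $\Hom_{\mathscr{O}_{A_n^d}}(X,\Sigma^d Y)=\Hom_{\mathscr{C}_{\Lambda}^{2d}}(X,\Sigma^d Y)$ and $\Sigma^d Y\in\mathscr{O}_{A_n^d}$, so it suffices to work inside $\mathscr{C}_{\Lambda}^{2d}$.

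The key step is the following chain, in which $D=\Hom_{\k}(-,\k)$ denotes the $\k$-linear dual and the $2d$-Calabi--Yau property supplies the Serre duality $\Hom_{\CC}(A,B)\cong D\Hom_{\CC}(B,\Sigma^{2d}A)$ (writing $\CC=\mathscr{C}_{\Lambda}^{2d}$):
$$\Hom_{\CC}(X,\Sigma^d Y)\;\cong\;D\Hom_{\CC}(\Sigma^d Y,\Sigma^{2d}X)\;\cong\;D\Hom_{\CC}(Y,\Sigma^{d}X).$$
Here the first isomorphism is Serre duality and the second is obtained by applying the autoequivalence $\Sigma^{-d}$ to both arguments. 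Restricting back to $\mathscr{O}_{A_n^d}$ this reads
$$\Hom_{\mathscr{O}_{A_n^d}}(X,\Sigma^d Y)\;\cong\;D\Hom_{\mathscr{O}_{A_n^d}}(Y,\Sigma^d X).$$

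Finally, I would use that $\mathscr{O}_{A_n^d}$ has only finitely many indecomposables and is $\Hom$-finite over $\k$, so every space above is a finite-dimensional $\k$-vector space. Dualizing preserves dimension, whence $\Hom_{\mathscr{O}_{A_n^d}}(X,\Sigma^d Y)\cong\Hom_{\mathscr{O}_{A_n^d}}(Y,\Sigma^d X)$ as $\k$-vector spaces; additivity of $\Hom$ then promotes this object-wise statement to the claimed isomorphism for the subcategories $\X$ and $\Y$. The main point to be careful about is that the identification produced by Serre duality is contravariant (it is a $\k$-dual), so the resulting isomorphism is a non-canonical equality of dimensions rather than a functorial one; this is nonetheless exactly what is needed for the intended application, namely the equivalence $\Hom_{\mathscr{O}_{A_n^d}}(\X,\Sigma^d\Y)=0$ if and only if $\Hom_{\mathscr{O}_{A_n^d}}(\Y,\Sigma^d\X)=0$. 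A secondary point is to confirm that the $d$-suspension of $\mathscr{O}_{A_n^d}$ really is the restriction of the ambient $\Sigma^d$ and that $\mathscr{O}_{A_n^d}$ is closed under it, so that the computation stays within the realm of $(d+2)$-angles.
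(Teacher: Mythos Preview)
Your proposal is correct and is essentially what the paper intends. The paper gives no explicit proof, stating only that the property is ``easy to verify based on the construction of $\mathscr{O}_{A_n^d}$''; the relevant feature of that construction, recalled just before the lemma, is precisely that $\mathscr{O}_{A_n^d}$ sits as a $d$-cluster tilting subcategory of the $2d$-Calabi--Yau category $\mathscr{C}_{\Lambda}^{2d}$, and your Serre-duality computation $\Hom(X,\Sigma^d Y)\cong D\Hom(Y,\Sigma^d X)$ is the intended verification.
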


We recall the geometric description of $\mathscr{O}_{A_n^d}$ based on \cite[Section\ 6]{OT}. For general positive integers $m,r$, index sets are defined as follows:
\begin{align*}
  \mathbf{I}_m^r & = \{ (i_0, \ldots, i_r) \in \{1, \ldots, m\}^{r+1} \mid \forall x
\in \{0,1,\dots,r-1\} \colon i_x + 2 \leq i_{x+1}\}\\
\leftsup{\circlearrowleft}{\mathbf{I}_m^r} &= \{ (i_0, \ldots, i_r) \in \mathbf{I}_m^r  \mid  i_r + 2 \leq i_0 + m\}.
\end{align*}

Let $X$ and $Y$ be increasing
$(r+1)$-tuples of real numbers which belong to $\leftsup{\circlearrowleft}{\mathbf{I}_m^r}$.  We say that $X=(x_0,\dots,x_{r})$ \emph{intertwines}
$Y=(y_0,\dots,y_{r})$  if $x_0<y_0<x_1<y_1\dots<x_{r}<y_{r}$.
We write $X \wr Y$ for this relation.  A collection of increasing $(d+1)$-tuples is called \emph{non-intertwining} if
no pair of the elements intertwine (in either order).

We call the elements in $\leftsup{\circlearrowleft}{\mathbf{I}_{n+2d+1}^d}$ \emph{diagonals}. There is a bijection between indecomposable objects in $\mathscr{O}_{A_n^d}$ and diagonals in $\leftsup{\circlearrowleft}{\mathbf{I}_{n+2d+1}^d}$. For convenience, we index the indecomposable objects in $\mathscr{O}_{A_n^d}$ by $\leftsup{\circlearrowleft}{\mathbf{I}_{n+2d+1}^d}$ as in \cite{OT}. For a set of diagonals $\mathfrak{X}$ in $\leftsup{\circlearrowleft}{\mathbf{I}_{n+2d+1}^d}$,  we denote the corresponding subcategory of $\mathscr{O}_{A_n^d}$ by $\X$. We write $O_{i_0, \ldots, i_d}$ for the indecomposable object corresponding to $(i_0, \ldots, i_d) \in \leftsup{\circlearrowleft}{\mathbf{I}_{n+2d+1}^d}$, sometimes $(i_0, \ldots, i_d)$ represents an indecomposable object in $\mathscr{O}_{A_n^d}$ directly without confusion.

The following lemma is crucial for our description of weak cotorsion pairs in $\mathscr{O}_{A_n^d}$.
\begin{lemma} \cite[Proposition 6.1]{OT}\label{lem}
Let $(i_0, \ldots, i_d), (j_0, \ldots, j_d) \in \leftsup{\circlearrowleft}{\mathbf{I}_{n+2d+1}^d}$. Then
\[ \Hom_{\mathscr{O}_{A_n^d}}(O_{i_0, \ldots, i_d}, O_{j_0, \ldots, j_d}[d]) \neq 0 \iff \big[ (i_0, \ldots, i_d) \wr (j_0, \ldots, j_d) \text{ or } (j_0, \ldots, j_d) \wr (i_0, \ldots, i_d) \big], \]
and in this case the $\Hom$-space is one-dimensional.
\end{lemma}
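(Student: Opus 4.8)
The claim is a purely combinatorial characterization of the degree-$d$ morphism spaces in the geometric model of $\mathscr{O}_{A_n^d}$, so the plan is to pass to that model, make the $d$-suspension explicit on diagonals, and reduce the statement to bookkeeping in the index set $\leftsup{\circlearrowleft}{\mathbf{I}_{n+2d+1}^d}$. I would work throughout inside the ambient $2d$-Calabi--Yau triangulated category $\mathscr{C}_{A_n^d}^{2d}$, recalling (as in Example \ref{def}) that $\mathscr{O}_{A_n^d}$ is a $d$-cluster tilting subcategory closed under $[d]$, and that its $d$-suspension $\Sigma^d$ is precisely the restriction of the $d$-fold shift $[d]$ of $\mathscr{C}_{A_n^d}^{2d}$.

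First I would record the symmetry. Since $\mathscr{C}_{A_n^d}^{2d}$ is $2d$-Calabi--Yau, its Serre functor is $[2d]$, so Serre duality gives
\[
\Hom_{\mathscr{O}_{A_n^d}}(O_{i_0,\dots,i_d},O_{j_0,\dots,j_d}[d]) \;\cong\; D\,\Hom_{\mathscr{O}_{A_n^d}}(O_{j_0,\dots,j_d},O_{i_0,\dots,i_d}[d]),
\]
which is exactly the Hom-isomorphism recorded just before this lemma. Hence the nonvanishing condition is automatically symmetric in the two diagonals, matching the symmetric shape ``$X\wr Y$ or $Y\wr X$'' on the right-hand side. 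It therefore suffices to establish the equivalence under the single alignment $X\wr Y$, the alignment $Y\wr X$ following by interchanging the two diagonals.

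Next I would make the $d$-suspension concrete and carry out the Hom-computation. As in the classical case $d=1$, where the cluster category is $D^b(kA_n)/\tau^{-1}[1]$ so that $[1]=\tau$ acts as a rotation of the polygon and ``two diagonals cross $\iff \Ext^1\neq 0$'' is standard, I expect $[d]$ to act on a diagonal $(j_0,\dots,j_d)$ by a fixed cyclic shift of its entries modulo $n+2d+1$. Transporting the computation to the bounded derived category of the higher Auslander algebra $A_n^d$ --- where explicit projective resolutions, the $d$-cluster tilting structure, and the $d$-almost split sequences are available --- I would read off morphisms on the fundamental domain that cuts out $\mathscr{O}_{A_n^d}$. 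The existence of a nonzero map $O_X\to O_Y[d]$ should then translate precisely into the chain of inequalities $x_0<y_0<x_1<y_1<\dots<x_d<y_d$ defining $X\wr Y$, and the corresponding graded piece being at most one-dimensional yields the final one-dimensionality assertion.

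The principal obstacle is this last computation: the Calabi--Yau symmetry and the $d=1$ analogy only fix the shape of the answer, not the exact vanishing threshold nor the one-dimensionality. To obtain these one must control the morphism spaces of $\mathscr{O}_{A_n^d}$ inside $\mathscr{C}_{A_n^d}^{2d}$ precisely enough to see how the gap conditions $i_x+2\le i_{x+1}$, together with the wrap-around condition $i_d+2\le i_0+n+2d+1$ defining $\leftsup{\circlearrowleft}{\mathbf{I}_{n+2d+1}^d}$, interact with the shifted diagonal. Pinning down the exact cyclic shift induced by $[d]$ and the associated combinatorial bookkeeping of these gap conditions is where the real work lies.
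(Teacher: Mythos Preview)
The paper does not give its own proof of this lemma: it is simply quoted as \cite[Proposition 6.1]{OT} and used as a black box for the geometric description of weak cotorsion pairs. There is therefore no argument in the paper to compare your attempt against.

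As for the proposal itself, the Serre duality step is correct and genuinely useful: the $2d$-Calabi--Yau property of $\mathscr{C}_{A_n^d}^{2d}$ gives $\Hom(O_X,O_Y[d])\cong D\Hom(O_Y,O_X[d])$, which explains the symmetric form of the right-hand side and reduces the claim to a single alignment. Beyond that, however, what you have written is a strategy rather than a proof. You explicitly flag the central computation --- determining the action of $[d]$ on diagonals and then tracking the gap conditions $i_x+2\le i_{x+1}$ and the wrap-around $i_d+2\le i_0+(n+2d+1)$ to produce the chain $x_0<y_0<\cdots<x_d<y_d$ and the one-dimensionality --- as ``where the real work lies,'' and you do not carry it out. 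In \cite{OT} this is obtained from an explicit description of morphisms between the indecomposables indexed by $\leftsup{\circlearrowleft}{\mathbf{I}_{n+2d+1}^d}$ inside the ambient derived category; your sketch points in the right direction but neither pins down the cyclic shift induced by $[d]$, nor verifies the inequalities, nor establishes the dimension bound. So the approach is not wrong, but the proposal as it stands names the substantive step without executing it.
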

Let  $\mathfrak{X}$ be a set of diagonals in $\leftsup{\circlearrowleft}{\mathbf{I}_{n+2d+1}^d}$. We define
$$\nc\mathfrak{X}=\{u\in\leftsup{\circlearrowleft}{\mathbf{I}_{n+2d+1}^d}|\;u \text{\;does\;not\;intertwines\;any\;} \text{diagonals\; in\;}\mathfrak{X}\}.$$
\begin{remark}\label{rem}
By Lemma \ref{lem}, $\nc\mathfrak{X}$ corresponds to the subcategory $(\Sigma^{-d}\X)^\perp={^\bot}(\Sigma^{d}\X)$.
\end{remark}
\begin{theorem}\label{i}
 Let $\X,\Y$ be subcategories of $\mathscr{O}_{A_n^d}$, and $\mathfrak{X},\mathfrak{Y}$ be the corresponding set of diagonals in $\leftsup{\circlearrowleft}{\mathbf{I}_{n+2d+1}^d}$ respectively. Then the following statements are equivalent.
\begin{itemize}
  \item [(1)] $(\X,\Y)$ is a weak cotorsion pair.
  \item [(2)] $\mathfrak{X}=\nc\mathfrak{Y}$ and $\mathfrak{Y}=\nc\mathfrak{X}$.
  \item [(3)] $(\Y,\X)$ is a weak cotorsion pair.
  \item [(4)] $\mathfrak{Y}=\nc\mathfrak{X}$ and $\mathfrak{X}=\nc\nc\mathfrak{X}$.
  \item [(5)] $\mathfrak{X}=\nc\mathfrak{Y}$ and $\mathfrak{Y}=\nc\nc\mathfrak{Y}$.
  \end{itemize}
\end{theorem}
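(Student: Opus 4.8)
The plan is to show that all the substantive content of the theorem is concentrated in the single equivalence $(1)\Leftrightarrow(2)$, after which the remaining equivalences become purely formal manipulations of the operator $\nc$. The first step I would carry out is a reduction of the definition of weak cotorsion pair. Since $\mathscr{O}_{A_n^d}$ is a $\Hom$-finite Krull--Schmidt category with only finitely many indecomposable objects, every subcategory closed under summands is automatically both contravariantly finite and covariantly finite: each such subcategory is $\add$ of finitely many indecomposables, so right and left approximations of any object can be built directly from the finitely many indecomposables and their finite-dimensional $\Hom$-spaces. Hence, for subcategories $\X,\Y$ of $\mathscr{O}_{A_n^d}$, the condition that $(\X,\Y)$ be a weak cotorsion pair collapses to the two equalities $\Y=(\Sigma^{-d}\X)^\perp$ and $\X={^\bot}(\Sigma^{d}\Y)$.

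Next I would prove $(1)\Leftrightarrow(2)$ by transporting these two equalities across the bijection between summand-closed subcategories and sets of diagonals in $\leftsup{\circlearrowleft}{\mathbf{I}_{n+2d+1}^d}$. By Remark \ref{rem}, the subcategory $(\Sigma^{-d}\X)^\perp$ corresponds to the diagonal set $\nc\mathfrak{X}$, so $\Y=(\Sigma^{-d}\X)^\perp$ is equivalent to $\mathfrak{Y}=\nc\mathfrak{X}$; applying the same remark with the roles of the two subcategories interchanged, ${^\bot}(\Sigma^{d}\Y)$ corresponds to $\nc\mathfrak{Y}$, so $\X={^\bot}(\Sigma^{d}\Y)$ is equivalent to $\mathfrak{X}=\nc\mathfrak{Y}$. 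Together these give exactly $(2)$. The equivalence $(1)\Leftrightarrow(3)$ then follows at once: condition $(3)$ is condition $(1)$ applied to the pair $(\Y,\X)$, and by what has just been shown this is equivalent to $\mathfrak{X}=\nc\mathfrak{Y}$ and $\mathfrak{Y}=\nc\mathfrak{X}$, which is again $(2)$, using that $(2)$ is symmetric under interchanging $\mathfrak{X}$ and $\mathfrak{Y}$.

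Finally I would dispatch $(2)\Leftrightarrow(4)$ and $(2)\Leftrightarrow(5)$ as formal identities. Assuming $(2)$, substituting $\mathfrak{Y}=\nc\mathfrak{X}$ into $\mathfrak{X}=\nc\mathfrak{Y}$ yields $\mathfrak{X}=\nc\nc\mathfrak{X}$, which together with $\mathfrak{Y}=\nc\mathfrak{X}$ is $(4)$; conversely, assuming $(4)$, one computes $\nc\mathfrak{Y}=\nc\nc\mathfrak{X}=\mathfrak{X}$, recovering $\mathfrak{X}=\nc\mathfrak{Y}$ and hence $(2)$. The equivalence $(2)\Leftrightarrow(5)$ is identical after swapping the roles of $\mathfrak{X}$ and $\mathfrak{Y}$: from $(2)$ one obtains $\mathfrak{Y}=\nc\mathfrak{X}=\nc\nc\mathfrak{Y}$, and from $(5)$ one obtains $\nc\mathfrak{X}=\nc\nc\mathfrak{Y}=\mathfrak{Y}$. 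These steps use only that $\nc$ is a well-defined self-map on sets of diagonals, and no further property of it.

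I do not expect a genuine obstacle here: the real geometric and homological input is entirely packaged in Remark \ref{rem}, which itself rests on Lemma \ref{lem} together with the $\Hom$-symmetry $\Hom_{\mathscr{O}_{A_n^d}}(\X,\Sigma^{d}\Y)\cong\Hom_{\mathscr{O}_{A_n^d}}(\Y,\Sigma^{d}\X)$, while the finiteness reduction removes the approximation hypotheses from the definition. The only point demanding care is the bookkeeping in the second paragraph, namely keeping straight that the right perpendicular $(\Sigma^{-d}\X)^\perp$ and the left perpendicular ${^\bot}(\Sigma^{d}\Y)$ are governed by $\nc\mathfrak{X}$ and $\nc\mathfrak{Y}$ respectively; this is consistent precisely because the two perpendiculars of a single subcategory coincide by the $\Hom$-symmetry lemma, so that $\nc$ is unambiguous.
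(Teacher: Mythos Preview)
Your proposal is correct and follows essentially the same approach as the paper: both first dispense with the functorial-finiteness conditions using that $\mathscr{O}_{A_n^d}$ has only finitely many indecomposables, and then reduce everything to Remark~\ref{rem} plus formal manipulations of the operator $\nc$. The only cosmetic difference is that the paper proves the cyclic chain $(1)\Rightarrow(2)\Rightarrow(3)\Rightarrow(4)\Rightarrow(5)\Rightarrow(1)$, whereas you organize the argument as a hub-and-spoke around $(2)$; the individual steps are identical in content.
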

\begin{proof}
Note that the category $\mathscr{O}_{A_n^d}$ contains only finitely many indecomposable objects, so every subcategory of $\mathscr{O}_{A_n^d}$ is functorially finite.

``$(1)\Rightarrow(2)$": Suppose $(\X,\Y)$ is a weak cotorsion pair, then $\X={^\bot}(\Sigma^{d}\Y)$ and  $\Y=(\Sigma^{-d}\X){^\bot}$. By Remark \ref{rem}, we have $\mathfrak{X}=\nc\mathfrak{Y}$ and $\mathfrak{Y}=\nc\mathfrak{X}$.

``$(2)\Rightarrow(3)$": It is easy to check by Remark \ref{rem}.

``$(3)\Rightarrow(4)$": Since $(\Y,\X)$ is a weak cotorsion pair, we have $\Y={^\bot}(\Sigma^{d}\X)$ and  $\X=(\Sigma^{-d}\Y){^\bot}$. This means $\mathfrak{Y}=\nc\mathfrak{X}$ and $\mathfrak{X}=\nc\nc\mathfrak{X}$ hold.

``$(4)\Rightarrow(5)$": One can check directly.

``$(5)\Rightarrow(1)$": $\mathfrak{X}=\nc\mathfrak{Y}$ and $\mathfrak{Y}=\nc\nc\mathfrak{Y}$ imply $\mathfrak{X}=\nc\mathfrak{Y}$ and $\mathfrak{Y}=\nc\mathfrak{X}$, which means $\X={^\bot}(\Sigma^{d}\Y)$ and  $\Y=(\Sigma^{-d}\X){^\bot}$ by Remark \ref{rem}, so $(\X,\Y)$ is a weak cotorsion pair.
\end{proof}
When $d=1$, $\mathscr{O}_{A_n^1}$ is the classical cluster categories of type $A_n$ and  $\leftsup{\circlearrowleft}{\mathbf{I}_{n+3}^1}$ is just the  $(n+3)$-gon geometric model given by \cite{HJR1}. Moreover, for a set of diagonals $\mathfrak{X}$ in $\leftsup{\circlearrowleft}{\mathbf{I}_{n+3}^1}$, our definition of  $\nc\mathfrak{X}$ is compatible with \cite{HJR1}, so we have the following corollary.
\begin{corollary}\cite[Proposition 2.3]{HJR1}
Let $\mathscr{O}_{A_n^1}$ be the cluster categories of type $A_n$. Then the following statements are equivalent.
\begin{itemize}
  \item [(1)] $(\X,\X{^\bot})$ is a torsion pair.
  \item [(2)] $\mathfrak{X}=\nc\nc\mathfrak{X}$.
  \end{itemize}
\end{corollary}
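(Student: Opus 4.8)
The plan is to derive the corollary directly from Theorem \ref{i}, pushing all the genuinely combinatorial content into that theorem and treating the corollary as a purely formal translation between torsion pairs and weak cotorsion pairs. Since $d=1$, the category $\mathscr{O}_{A_n^1}$ is triangulated and, as recorded in the remark above, the notions of cotorsion pair and weak cotorsion pair coincide; moreover $\mathscr{O}_{A_n^1}$ has only finitely many indecomposables, so every subcategory is functorially finite and the finiteness conditions in the definition of a weak cotorsion pair hold automatically. Hence the task reduces to a chain of equivalences with no further geometry.

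First I would verify the shift identity $\Sigma^{-1}(\X^{\bot})=(\Sigma^{-1}\X)^{\bot}$, which is immediate from $\Hom(\Sigma^{-1}\X,M)\cong\Hom(\X,\Sigma M)$ together with the definition of $(-)^{\bot}$. Next, applying Remark \ref{remark3} with $\Sigma^{d}\Y=\X^{\bot}$ (so $\Y=\Sigma^{-1}\X^{\bot}$ when $d=1$), the pair $(\X,\X^{\bot})$ is a torsion pair if and only if $(\X,\Sigma^{-1}\X^{\bot})$ is a cotorsion pair, which by the $d=1$ coincidence is the same as saying that $(\X,(\Sigma^{-1}\X)^{\bot})$ is a weak cotorsion pair. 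Writing $\Y:=(\Sigma^{-1}\X)^{\bot}$, Remark \ref{rem} identifies its diagonals as $\mathfrak{Y}=\nc\mathfrak{X}$. Now I would invoke Theorem \ref{i}, specifically the equivalence of (1) and (4): $(\X,\Y)$ is a weak cotorsion pair if and only if $\mathfrak{Y}=\nc\mathfrak{X}$ and $\mathfrak{X}=\nc\nc\mathfrak{X}$. Since the first condition holds by the very choice of $\Y$, the pair is a weak cotorsion pair exactly when $\mathfrak{X}=\nc\nc\mathfrak{X}$. Chaining these equivalences gives precisely ``$(\X,\X^{\bot})$ is a torsion pair $\iff\mathfrak{X}=\nc\nc\mathfrak{X}$''.

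The hard part will not be any new argument but the bookkeeping of the two distinct orthogonals: the $(-)^{\bot}$ in the statement is the unshifted $\Hom$-orthogonal, whereas both Remark \ref{rem} and the definition of a weak cotorsion pair are phrased via the shifted orthogonal $(\Sigma^{-d}\X)^{\bot}$. The step most prone to error is therefore confirming $\Sigma^{-1}(\X^{\bot})=(\Sigma^{-1}\X)^{\bot}$ and applying Remark \ref{remark3} with the correct direction of the shift; once these are fixed, the corollary follows formally. Finally I would point to the compatibility, already noted in the text before the corollary, between the relation $\nc$ here and the non-crossing condition of \cite{HJR1}, so that the statement coincides with \cite[Proposition 2.3]{HJR1}.
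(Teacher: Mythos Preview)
Your proposal is correct and follows essentially the same route the paper intends: the corollary is stated in the paper without an explicit proof, as an immediate consequence of Theorem~\ref{i} for $d=1$ together with the compatibility of $\nc$ with \cite{HJR1}. You have simply spelled out the implicit translations---Remark~\ref{remark3} to pass from torsion to cotorsion pairs, the $d=1$ coincidence of cotorsion and weak cotorsion pairs, and Remark~\ref{rem} to identify $\mathfrak{Y}=\nc\mathfrak{X}$---and then invoked the equivalence $(1)\Leftrightarrow(4)$ of Theorem~\ref{i}, which is exactly what the paper has in mind.
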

\begin{example}
Let $d = 2$ and $\cT = \cO_{ A_2^2 }$.  This is the $4$-angulated (higher) cluster category of type $A_2$.  The indecomposable objects can be identified with the elements of the set
\[
  {}^{ \circlearrowleft }\mbox{\bf I}^2_7
  =
  \{\, 135,136,146,246,247,257,357 \,\}.
\]
The AR quiver of $\cT$ is shown in Figure \ref{fig:AR_quiver1}.
\begin{figure}[h]
\begin{tikzpicture}[scale=3]
  \node at (0:1.0){$135$};
  \draw[->] (10:1.0) arc (10:40:1.0);
  \node at (50:1.0){$136$};
  \draw[->] (60:1.0) arc (60:90:1.0);
  \node at (100:1.0){$146$};
  \draw[->] (110:1.0) arc (110:140:1.0);
  \node at (150:1.0){$246$};
  \draw[->] (160:1.0) arc (160:190:1.0);
  \node at (200:1.0){$247$};
  \draw[->] (210:1.0) arc (210:240:1.0);
  \node at (250:1.0){$257$};
  \draw[->] (260:1.0) arc (260:290:1.0);
  \node at (300:1.0){$357$};
  \draw[->] (310:1.0) arc (310:350:1.0);
\end{tikzpicture}
\caption{The AR quiver of the $4$-angulated category $\cT$}
\label{fig:AR_quiver1}
\end{figure}

If $X,Y \in \cT$ are indecomposable objects, then by Lemma \ref{lem} we have
\[
  \cT( X,Y )
  =
  \left\{
    \begin{array}{cl}
      k & \mbox{ if $Y$ is $X$ or its immediate successor in the AR quiver, }\\
      0 & \mbox{ otherwise. }
    \end{array}
  \right.
\]
For example, the functor $\Hom_{ \cT }( 135,\Sigma^2(-) )$ is non-zero on $246$ and $247$.  It is zero on every other indecomposable object. Now we give a complete classification of weak cotorsion pairs  in $\cT$.
\begin{eqnarray*}
\X_{1}=\{ 0\}&\quad\quad& \Y_{1}=\{\cT \}\\
\X_{2}=\{135 \}&\quad\quad& \Y_{2}=\{135,136,146,257,357 \}\\
\X_{3}=\{136 \}&\quad\quad& \Y_{3}=\{135,136,146,246,357 \}\\
\X_{4}=\{146 \}&\quad\quad& \Y_{4}=\{135,136,146,246,247 \}\\
\X_{5}=\{246 \}&\quad\quad& \Y_{5}=\{136,146,246,247,257 \}\\
\X_{6}=\{247 \}&\quad\quad& \Y_{6}=\{146,246,247,257,357 \}\\
\X_{7}=\{257 \}&\quad\quad& \Y_{7}=\{246,247,257,357,135 \}\\
\X_{8}=\{357 \}&\quad\quad& \Y_{8}=\{247,257,357,135,136 \}\\
\X_{9}=\{135,136 \}&\quad\quad& \Y_{9}=\{135,136,146,357 \}\\
\X_{10}=\{136, 146 \}&\quad\quad& \Y_{10}=\{135,136,146,246 \}\\
\X_{11}=\{146,246 \}&\quad\quad& \Y_{11}=\{136,146,246,247 \}\\
\X_{12}=\{246,247 \}&\quad\quad& \Y_{12}=\{146,246,247,257 \}\\
\X_{13}=\{247,257 \}&\quad\quad& \Y_{13}=\{246,247,257,357 \}\\
\X_{14}=\{257,357 \}&\quad\quad& \Y_{14}=\{247,257,357,135 \}\\
\X_{15}=\{357,135 \}&\quad\quad& \Y_{15}=\{257,357,135,136 \}\\
\X_{16}=\{135,246 \}&\quad\quad& \Y_{16}=\{136,146,257 \}\\
\X_{17}=\{135,247 \}&\quad\quad& \Y_{17}=\{146,257,357 \}\\
\X_{18}=\{136,247 \}&\quad\quad& \Y_{18}=\{146,246,357 \}\\
\X_{19}=\{136,257 \}&\quad\quad& \Y_{19}=\{135,246,357 \}\\
\X_{20}=\{146,257 \}&\quad\quad& \Y_{20}=\{135,246,247 \}\\
\X_{21}=\{146,357 \}&\quad\quad& \Y_{21}=\{135,136,247 \}\\
\X_{22}=\{246,357 \}&\quad\quad& \Y_{22}=\{136,247,257 \}\\
&\X_{23}=\{135,136,146 \}=\Y_{23}\\
&\X_{24}=\{136,146,246 \}=\Y_{24}\\
&\X_{25}=\{146,246,247 \}=\Y_{25}\\
&\X_{26}=\{246,247,257 \}=\Y_{26}\\
&\X_{27}=\{247,257,357 \}=\Y_{27}\\
&\X_{28}=\{257,357,135 \}=\Y_{28}\\
&\X_{29}=\{357,135,136 \}=\Y_{29}\\
\end{eqnarray*}
Note that both $(\X_i,\Y_i)$ and $(\Y_i,\X_i)$ are weak cotorsion pairs in $\cT$ for $i=1,2,\cdots,22$. When $i=23,\cdots,29$, $\X_i=\Y_i$ are cluster tilting subcategories of $\cT$, $(\X_i,\Y_i)=(\Y_i,\X_i)$ are weak cotorsion pairs.
\end{example}

\section{Mutation of cotorsion pairs in $(d+2)$-angulated categories}
In this section, we define mutation of cotorsion pairs in $(d+2)$-angulated categories, and show that any mutation of a (weak) cotorsion pair in $\C$ is again a (weak) cotorsion pair.

\subsection{Mutation of cotorsion pairs}
First, we give the definition of mutation of subcategories,
analogous to the mutation pairs defined in Definition \ref{mu}.

\begin{definition}\label{mu}
Let $X$ be an object of $\mathcal{C}$.
\begin{itemize}
\item [(1)] We call $\mu^{-1}_{\D}(X)$ the \emph{forward} $\mathcal{D}$-\emph{mutation} of $X$ if there exists a $(d+2)$-angle $$X\xrightarrow{a_1}D_1\xrightarrow{a_2}D_2\xrightarrow{a_3}\cdots\xrightarrow{a_{d}}D_{d}\xrightarrow{a_{d+1}}\mu^{-1}_{\D}(X)\xrightarrow{a_{d+2}}\Sigma^d X$$
where $D_i\in\mathcal{D}$ for $i=1,\cdots,d$, $a_1$ is a left $\mathcal{D}$-approximation and $a_{d+1}$ is a right $\mathcal{D}$-approximation.
\vspace{1mm}

\item [(2)] We call $\mu_{\D}(X)$ the \emph{backward} $\mathcal{D}$-\emph{mutation} of $X$ if there exists a $(d+2)$-angle
 $$\mu_{\D}(X)\xrightarrow{c_1}D_1\xrightarrow{c_2}D_2\xrightarrow{c_3}\cdots\xrightarrow{c_{d}}D_{d}\xrightarrow{c_{d+1}}X\xrightarrow{c_{d+2}}\Sigma^d X$$
where $D_i\in\mathcal{D}$ for $i=1,\cdots,d$, $c_1$ is a left $\mathcal{D}$-approximation and $c_{d+1}$ is a right $\mathcal{D}$-approximation.
\end{itemize}
\end{definition}
\begin{definition}
Given two subcategories $\X,\D$ of $\mathcal{C}$ with $\D\subset\X$.
\begin{itemize}
\item [(1)] We define the \emph{forward} $\mathcal{D}$-\emph{mutation} of $\X$ by
$$\mu^{-1}_{\D}(\X):=\{\mu^{-1}_{\D}(X)\mid X\in\X\}\cup\D.$$
\item [(2)] We define the \emph{backward} $\mathcal{D}$-\emph{mutation} of $\X$ by
$$\mu_{\D}(\X):=\{\mu_{\D}(X)\mid X\in\X\}\cup\D.$$
\end{itemize}
\end{definition}

\begin{remark}
It is clear that $\mu_{\D}(\D)=\D=\mu^{-1}_{\D}(\D)$. When $\D=0$, we have $\mu^{-1}_{\D}(\X)=\Sigma^d\X$ and $\mu_{\D}(\X)=\Sigma^{-d}\X$.
\end{remark}

We give some definitions which will be used in what follows.

\begin{definition}
Let $(\X,\Y)$ be a cotorsion pair in $\C$. We call $\I(\X)=\X\cap\Y$ the core of $(\X,\Y)$.
\end{definition}

\begin{remark}\label{l}
By the definition of cotorsion pair, it is easy to see the core $\I(\X)$ is a $d$-rigid subcategory.
\end{remark}

\begin{definition}\cite[Definition 5.1]{Z}\label{def1}
Let $\D$ be a subcategory of $\C$. The subcategory $\D$ is called \emph{strongly} \emph{contravariantly} \emph{finite}, if for any object $C\in\C$, there exists a $(d+2)$-angle
\begin{equation}\label{t1}
\begin{array}{l}
B\rightarrow D_1\rightarrow D_2\rightarrow \cdots\rightarrow D_{d}\xrightarrow{g}C\rightarrow \Sigma^d B,
\end{array}
\end{equation}
where $D_i\in\mathcal{D}$ for $i=1,\cdots,d$, $g$ is a right $\mathcal{D}$-approximation. Dually,  $\D$ is called \emph{strongly} \emph{covariantly} \emph{finite}, if for any object $C\in\C$, there exists a $(d+2)$-angle
$$C\xrightarrow{f} D_1\rightarrow D_2\rightarrow \cdots\rightarrow D_{d}\rightarrow A\rightarrow \Sigma^d C,$$
where $D_i\in\mathcal{D}$ for $i=1,\cdots,d$, $f$ is a left $\mathcal{D}$-approximation.

A strongly contravariantly finite and strongly covariantly finite subcategory is called \emph{strongly} \emph{functorially} \emph{finite}.
\end{definition}

\begin{remark}\label{rem1}
In Definition \ref{def1}, if we further assume that $\D$ is a $d$-rigid subcategory of $\C$, thus we can obtain $B\in(\Sigma^{-d}\D){^\bot}$ and $A\in{^\bot}(\Sigma^{d}\D)$.
In fact, applying the functor $\C(\D,-)$ to the $(d+2)$-angle (\ref{t1}),
we have the following exact sequence:
$$\C(\D,D_d)\xrightarrow{\C(\D,~g)}\C(\D,C)\xrightarrow{~}\C(\D,\Sigma^d B)\xrightarrow{~}\C(\D,D_1)=0.$$
Since $g$ is a right $\D$-approximation of $C$,
we have that $\C(\D,g)$ is an epimorphism. It follows that
$\C(\D,\Sigma^d B)=0$ and then $B\in(\Sigma^{-d}\D){^\bot}$.
Similarly, we can show that $A\in{^\bot}(\Sigma^{d}\D)$.
\end{remark}

\subsection{Compatibility with subfactor $(d+2)$-angulated categories}
In this subsection, we fix a strongly functorially finite $d$-rigid subcategory $\D$ of $\C$ which satisfies the condition ${^\bot}(\Sigma^d\D)=(\Sigma^{-d}\D){^\bot}$, which is denoted by $\Z$. We can show that $(\Z,\Z)$ is a $\mathcal{D}$-mutation pair.

\begin{lemma}\label{lem3}
Let $\D$ be a strongly functorially finite $d$-rigid subcategory of $\C$ which satisfies ${^\bot}(\Sigma^d\D)=(\Sigma^{-d}\D){^\bot}$. Then $(\Z,\Z)$ is a $\mathcal{D}$-mutation pair.
\end{lemma}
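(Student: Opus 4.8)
The plan is to check directly the two defining conditions of a $\D$-mutation pair for the pair $(\Z,\Z)$, where $\Z={^\bot}(\Sigma^d\D)=(\Sigma^{-d}\D)^\perp$. The $(d+2)$-angles required by the two conditions will be produced by the strong functorial finiteness of $\D$, and the two approximation properties demanded of each angle will be verified by feeding the angle into a suitable $\Hom$-functor. Before starting I would record the inclusion $\D\subseteq\Z$: since $\D$ is $d$-rigid, $\Hom_\C(\D,\Sigma^d\D)=0$, whence $\D\subseteq{^\bot}(\Sigma^d\D)=\Z$; this is what makes $(\Z,\Z)$ a legitimate candidate.

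For condition (1), I would fix $X\in\Z$ and invoke strong covariant finiteness (Definition \ref{def1}) to obtain a $(d+2)$-angle
$$X\xrightarrow{a_1}D_1\to\cdots\to D_d\xrightarrow{a_{d+1}}A\xrightarrow{a_{d+2}}\Sigma^d X$$
with $D_i\in\D$ and $a_1$ a left $\D$-approximation. By Remark \ref{rem1}, the $d$-rigidity of $\D$ forces $A\in{^\bot}(\Sigma^d\D)=\Z$, so I set $Y:=A$ and obtain the shape required. It then remains only to see that $a_{d+1}$ is a right $\D$-approximation. I would apply $\Hom_\C(D',-)$, for an arbitrary $D'\in\D$, to the angle, yielding the exact piece
$$\Hom_\C(D',D_d)\xrightarrow{\ \Hom_\C(D',a_{d+1})\ }\Hom_\C(D',A)\longrightarrow\Hom_\C(D',\Sigma^d X).$$
Since $\Sigma^d$ is an automorphism, $\Hom_\C(D',\Sigma^d X)\cong\Hom_\C(\Sigma^{-d}D',X)=0$ because $X\in(\Sigma^{-d}\D)^\perp$; hence $\Hom_\C(D',a_{d+1})$ is surjective for every $D'$, i.e. $a_{d+1}$ is a right $\D$-approximation.

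Condition (2) is the mirror image. For $Y\in\Z$ I would use strong contravariant finiteness to get a $(d+2)$-angle $B\xrightarrow{c_1}D_1\to\cdots\to D_d\xrightarrow{c_{d+1}}Y\xrightarrow{c_{d+2}}\Sigma^d B$ with $c_{d+1}$ a right $\D$-approximation, and Remark \ref{rem1} again places $B\in(\Sigma^{-d}\D)^\perp=\Z$, so I set $X:=B$. To promote $c_1$ to a left $\D$-approximation I would apply $\Hom_\C(-,D')$ to this angle (equivalently, to its right rotation $\Sigma^{-d}Y\to B\to D_1\to\cdots\to Y$), obtaining the exact piece
$$\Hom_\C(D_1,D')\xrightarrow{\ \Hom_\C(c_1,D')\ }\Hom_\C(B,D')\longrightarrow\Hom_\C(\Sigma^{-d}Y,D').$$
Here $\Hom_\C(\Sigma^{-d}Y,D')\cong\Hom_\C(Y,\Sigma^d D')=0$ since $Y\in{^\bot}(\Sigma^d\D)$, so $\Hom_\C(c_1,D')$ is surjective and $c_1$ is a left $\D$-approximation.

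The one place where genuine care is needed — and what I expect to be the crux rather than a true obstacle — is upgrading the single approximation property handed over by strong functorial finiteness to the second one required by the definition of a $\D$-mutation pair. This is precisely where the standing hypothesis ${^\bot}(\Sigma^d\D)=(\Sigma^{-d}\D)^\perp$ is exploited: both descriptions of $\Z$ are used, once to locate $A$ (resp. $B$) inside $\Z$ and once to force the vanishing of $\Hom_\C(D',\Sigma^d X)$ (resp. $\Hom_\C(\Sigma^{-d}Y,D')$). Everything else — the existence of the angles, membership of the intermediate terms in $\D$, and $\D\subseteq\Z$ — is immediate from the definitions and Remark \ref{rem1}.
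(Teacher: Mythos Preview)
Your proof is correct and follows essentially the same route as the paper: both arguments produce the required $(d+2)$-angles from strong covariant/contravariant finiteness, locate the end term in $\Z$ via Remark~\ref{rem1} together with the hypothesis ${^\bot}(\Sigma^d\D)=(\Sigma^{-d}\D)^\perp$, and then upgrade the single given approximation to the second one by observing that the relevant $\Hom$ into (resp.\ out of) $\Sigma^d X$ (resp.\ $\Sigma^{-d}Y$) vanishes because $X,Y\in\Z$. You spell out the exact-sequence step a bit more explicitly than the paper does, but there is no substantive difference.
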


\begin{proof}
Since $\D$ is a strongly covariantly finite $d$-rigid subcategory of $\C$, we have $\D\subset\Z$ and for any object $Z\in\Z$, there exists a $(d+2)$-angle
$$Z\xrightarrow{f} D_1\rightarrow D_2\rightarrow \cdots\rightarrow D_{d}\xrightarrow{g} A\rightarrow\Sigma^d Z,$$
where $D_i\in\mathcal{D}$ for $i=1,\cdots,d$, $f$ is a left $\mathcal{D}$-approximation.
By Remark \ref{rem1}, we have $A\in{^\bot}(\Sigma^{d}\D)$.
 By assumption $A\in{^\bot}(\Sigma^{d}\D)=(\Sigma^{-d}\D){^\bot}=\Z$.
 Since $\Hom_{\C}(\D,\Sigma^d Z)=0$, we have that $g$ is a right $\mathcal{D}$-approximation of $A$.

 Since $\D$ is a strongly contravariantly finite subcategory of $\C$, for any object $Z\in\Z$, we can prove similarly that there exists a $(d+2)$-angle
$$B\xrightarrow{h} D_1\rightarrow D_2\rightarrow \cdots\rightarrow D_{d}\xrightarrow{k}Z\rightarrow \Sigma^d B,$$
where $D_i\in\mathcal{D}$ for $i=1,\cdots,d$, $h$ is a left $\mathcal{D}$-approximation, $k$ is a right $\mathcal{D}$-approximation and $B\in\Z$. This shows that $(\Z,\Z)$ is a $\mathcal{D}$-mutation pair.
\end{proof}

The following results are useful in the sequel.

\begin{lemma}\label{c}
Let $\D$ be a strongly functorially finite $d$-rigid subcategory of $\C$  satisfying  ${^\bot}(\Sigma^d\D)=(\Sigma^{-d}\D){^\bot}$, which is denoted by $\Z$. Moreover, we assume that $\Z$ is extension closed. Then we have the following results.
\begin{itemize}
  \item [(1)]  The subfactor category $\mathfrak{U}:=\Z/\D$ is a $(d+2)$-angulated category.
  \vspace{1mm}

  \item [(2)] For any $X,Y\in\Z$, there exists an isomorphism
  $$\mathfrak{U}(X,T^d Y)\cong\C(X,\Sigma^d Y).$$
\end{itemize}
\end{lemma}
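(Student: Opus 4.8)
The plan is to treat the two parts separately, with part (1) following essentially for free from the machinery already assembled and part (2) carrying the real content.

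For part (1), observe that the standing hypotheses on $\D$ are exactly those of Lemma~\ref{lem3}, which guarantees that $(\Z,\Z)$ is a $\D$-mutation pair. Combined with the standing assumption that $\Z$ is extension closed, Lemma~\ref{lem2} applies verbatim and yields that $(\Z/\D, T^d, \Phi)$ is a $(d+2)$-angulated category. No further argument is required here.

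For part (2), I would build the isomorphism directly from the $(d+2)$-angle defining the $d$-shift $T^d Y$. By construction there is a $(d+2)$-angle
$$Y \xrightarrow{a_1} D_1 \xrightarrow{a_2} \cdots \xrightarrow{a_d} D_d \xrightarrow{a_{d+1}} T^d Y \xrightarrow{a_{d+2}} \Sigma^d Y$$
with $D_i \in \D$, $a_1$ a left $\D$-approximation and $a_{d+1}$ a right $\D$-approximation. Applying the cohomological functor $\C(X,-)$ gives a long exact sequence whose relevant segment is
$$\C(X, D_d) \xrightarrow{\C(X, a_{d+1})} \C(X, T^d Y) \xrightarrow{\C(X, a_{d+2})} \C(X, \Sigma^d Y) \longrightarrow \C(X, \Sigma^d D_1).$$
The claim is that $\C(X, a_{d+2})$ descends to the desired isomorphism. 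I would first establish surjectivity: since $X\in\Z={^\bot}(\Sigma^d\D)$ and $D_1\in\D$, the term $\C(X,\Sigma^d D_1)$ vanishes, so $\C(X,a_{d+2})$ is onto. Next I would identify its kernel, which by exactness equals the image of $\C(X,a_{d+1})$, with $[\D](X,T^dY)$: one inclusion is immediate because any map factoring through $a_{d+1}$ factors through $D_d\in\D$; for the reverse, I would invoke that $a_{d+1}$ is a right $\D$-approximation, so any $f\colon X\to T^dY$ factoring as $X\to D\to T^dY$ with $D\in\D$ has its second leg lifting through $a_{d+1}$, whence $f$ factors through $a_{d+1}$. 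Then $\C(X,a_{d+2})$ induces
$$\mathfrak{U}(X, T^d Y) = \C(X, T^d Y)/[\D](X, T^d Y) \cong \C(X, \Sigma^d Y),$$
as required.

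The step I expect to be the main obstacle is precisely the kernel computation: one must check that the space of morphisms factoring through \emph{arbitrary} objects of $\D$ coincides exactly with the image of the single approximation map $a_{d+1}$, and this is where the right $\D$-approximation hypothesis is indispensable. Everything else reduces to the long exact sequence attached to a $(d+2)$-angle together with the defining membership $X\in\Z$.
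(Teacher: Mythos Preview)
Your proposal is correct and follows essentially the same route as the paper's proof: both invoke Lemma~\ref{lem3} (implicitly in the paper for part~(1)) together with Lemma~\ref{lem2}, and for part~(2) both apply $\C(X,-)$ to the defining $(d+2)$-angle of $T^dY$, use $X\in{^\bot}(\Sigma^d\D)$ to get surjectivity, and then identify the kernel with $[\D](X,T^dY)$ via the right $\D$-approximation property of $a_{d+1}$. Your identification of the kernel computation as the crux is exactly right.
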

\begin{proof}
The statement (1) is obvious by Lemma \ref{lem2}. We only show the statement (2).

For any $Y\in\Z$, since $(\Z,\Z)$ is a $\mathcal{D}$-mutation pair by Lemma \ref{lem3}, there exists a $(d+2)$-angle
$$Y\xrightarrow{f} D_1\rightarrow D_2\rightarrow \cdots\rightarrow D_{d}\xrightarrow{g} T^d Y\rightarrow\Sigma^d Y,$$
where $D_i\in\mathcal{D}$ for $i=1,\cdots,d$, $T^d Y\in\Z$, $f$ is a left $\mathcal{D}$-approximation and $g$ is a right $\mathcal{D}$-approximation.

For any $X\in\Z$, applying $\Hom_{\C}(X,-)$ to the above $(d+2)$-angle, we have the following exact sequence:
$$\C(X,D_{d})\xrightarrow{\C(X,~g)}\C(X,T^d Y)\rightarrow\C(X,\Sigma^d Y)\rightarrow\C(X,\Sigma^d D_1).$$
Since $\C(\Z,\Sigma^d\D)=0$, we have $\C(X,\Sigma^d D_1)=0$. In this case, we get
$$\C(X,\Sigma^d Y)\cong\C(X,T^d Y)/\text{Im}(C(X,g)).$$
 It is enough to show $\text{Im}(\C(X,g))=[\D](X,T^d Y)$.
For any morphism $h\in\text{Im}(\C(X,g))$, there exists a morphism $k\in\C(X,D_{d})$ such that $h=g\circ k$. It follows that $h\in[\D](X,T^d Y)$. Conversely, for any morphism $s\in[\D](X,T^d Y)$, there exists an object $D^\prime\in\D$ such that $s=a\circ b$, where $a :D^\prime\rightarrow T^d Y$ and $b:X\rightarrow D^\prime$. Since $g: D_{d}\rightarrow T^d Y$ is a right $\mathcal{D}$-approximation, there exists a morphism $t:D^\prime\rightarrow D_d$ such that $a=g\circ t$. It follows that
$$s=a\circ b=s=g\circ t\circ b\in\text{Im}(\Hom_{\C}(X,g)).$$

Since $X,T^d Y\in\Z$, we have $\C(X,T^d Y)=\Z(X,T^d Y)$. Hence $$\mathfrak{U}(X,T^d Y)=\Z(X,T^d Y)/[\D](X,T^d Y)=\C(X,T^d Y)/\text{Im}(C(X,g))\cong\C(X,\Sigma^d Y).$$
\end{proof}

\begin{lemma}\label{b}
Let $(\X,\Y)$ be a cotorsion pair in $\C$ with core $\I(\X)$. Then $\D\subset\I(\X)$ if and only if $\D\subset\X\subset\Z$ and $\D\subset\Y\subset\Z$.
\end{lemma}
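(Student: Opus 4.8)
The plan is to prove the two implications separately, observing up front that the ``backward'' direction is essentially immediate and the content lies entirely in the ``forward'' direction, where I would exploit both the defining orthogonality relations of the cotorsion pair and the two descriptions of $\Z$.

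For the direction assuming $\D\subset\X\subset\Z$ and $\D\subset\Y\subset\Z$, I would simply note that $\D\subset\X$ and $\D\subset\Y$ together give $\D\subset\X\cap\Y=\I(\X)$. The containments inside $\Z$ play no role here, so this half is a one-line set-theoretic observation.

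For the converse, suppose $\D\subset\I(\X)=\X\cap\Y$, so that $\D\subset\X$ and $\D\subset\Y$ hold automatically; the task is to upgrade these to $\X\subset\Z$ and $\Y\subset\Z$. Here I would use the two equal descriptions $\Z={^\bot}(\Sigma^d\D)=(\Sigma^{-d}\D){^\bot}$ together with the cotorsion pair identities $\Hom_{\C}(\X,\Sigma^d\Y)=0$, $\X={^\bot}(\Sigma^d\Y)$ and $\Y=(\Sigma^{-d}\X){^\bot}$. To see $\X\subset\Z$, I would apply the automorphism $\Sigma^d$ to the inclusion $\D\subset\Y$, obtaining $\Sigma^d\D\subset\Sigma^d\Y$; since $\Hom_{\C}(\X,\Sigma^d\Y)=0$ this forces $\Hom_{\C}(\X,\Sigma^d\D)=0$, i.e.\ $\X\subset{^\bot}(\Sigma^d\D)=\Z$. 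Dually, to see $\Y\subset\Z$, I would apply $\Sigma^{-d}$ to $\D\subset\X$, obtaining $\Sigma^{-d}\D\subset\Sigma^{-d}\X$; since $\Y=(\Sigma^{-d}\X){^\bot}$ gives $\Hom_{\C}(\Sigma^{-d}\X,\Y)=0$, we get $\Hom_{\C}(\Sigma^{-d}\D,\Y)=0$, i.e.\ $\Y\subset(\Sigma^{-d}\D){^\bot}=\Z$.

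I do not expect a genuine obstacle in this argument: at bottom it is just the monotonicity of the operations ${^\bot}(-)$ and $(-){^\bot}$ under inclusion, combined with the fact that $\Sigma^{d}$ is an automorphism. The only points requiring care are bookkeeping ones, namely tracking on which side each orthogonal is taken and applying $\Sigma^{d}$ versus $\Sigma^{-d}$ consistently so that the inclusion $\D\subset\Y$ (resp.\ $\D\subset\X$) is paired with the correct half of the cotorsion pair condition; the hypothesis $\D\subset\I(\X)$ is used precisely to supply both inclusions $\D\subset\X$ and $\D\subset\Y$ simultaneously.
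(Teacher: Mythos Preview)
Your proposal is correct and follows essentially the same approach as the paper: the backward direction is immediate from $\I(\X)=\X\cap\Y$, and for the forward direction both you and the paper use $\D\subset\Y$ together with $\Hom_{\C}(\X,\Sigma^d\Y)=0$ to get $\X\subset{^\bot}(\Sigma^d\D)=\Z$, then the dual argument for $\Y\subset\Z$. The only cosmetic difference is that for $\Y\subset\Z$ you invoke the identity $\Y=(\Sigma^{-d}\X){^\bot}$, whereas it suffices (and is slightly more direct) to use $\Hom_{\C}(\Sigma^{-d}\X,\Y)=0$, which is just $\Hom_{\C}(\X,\Sigma^d\Y)=0$ rewritten via the automorphism $\Sigma^d$.
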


\begin{proof}
The `if' part is trivial, since  $\I(\X)=\X\cap\Y$. To prove the `only if' part, let $X$ be an arbitrary object in $\X$. Since  $\D\subset\I(\X)\subset\Y$, we have $\Hom(X,\Sigma^d\D)=0$. It follows that $X\in\Z$ and hence $\X\subset\Z$. Similarly, we have $\Y\subset\Z$.
\end{proof}
From now on, we assume that $\Z$ is extension closed. By the lemma above, one can consider the relationship between cotorsion pairs in $\C$ whose cores contain $\D$ and
cotorsion pairs in the subfactor $(d+2)$-angulated category $\mathfrak{U}$. In the following, for a subcategory $\W$ of $\C$ satisfying  $\D\subset\W\subset\Z$, we denote $\overline{\W}$ the subcategory of $\mathfrak{U}$. It is clear that any subcategory of $\mathfrak{U}$ has this form.
\begin{theorem}\label{d}
Let $(\X,\Y)$ be a cotorsion pair in $\C$ with $\D\subset\I(\X)$. Then $(\overline{\X},\overline{\Y})$ is a cotorsion pair in $\mathfrak{U}$ with $\I(\overline{\X})=\overline{\I(\X)}$. Moreover, the map $(\X,\Y)\mapsto(\overline{\X},\overline{\Y})$ is a bijection from the set of cotorsion pair in $\C$ whose cores contain $\D$ to the set of cotorsion pair in $\mathfrak{U}$.
\end{theorem}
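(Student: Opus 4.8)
The plan is to verify the two assertions separately: first that $(\overline{\X},\overline{\Y})$ is a cotorsion pair in $\mathfrak{U}$ with the expected core, and then that the assignment is a bijection. For the first assertion I would establish the orthogonality condition $\Hom_{\mathfrak{U}}(\overline{\X},T^d\overline{\Y})=0$ directly from Lemma \ref{c}(2): for $X\in\X$ and $Y\in\Y$ we have the isomorphism $\mathfrak{U}(X,T^dY)\cong\C(X,\Sigma^dY)$, and the right-hand side vanishes because $(\X,\Y)$ is a cotorsion pair, so $\Hom_{\C}(\X,\Sigma^d\Y)=0$. To produce the two approximation $(d+2)$-angles required in $\mathfrak{U}$, I would take an arbitrary object of $\mathfrak{U}$ (which by the standing convention is some $C\in\Z$), invoke the cotorsion-pair structure of $(\X,\Y)$ in $\C$ to obtain the two defining $(d+2)$-angles in $\C$, and then push them into $\mathfrak{U}$ along the subfactor construction recalled before Lemma \ref{lem2}. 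The point here is that the $(d+2)$-angles in $\C$ used to witness the cotorsion pair can be chosen with all intermediate terms in $\X$ or $\Y$, hence (by Lemma \ref{b}) inside $\Z$, and with the key first morphism being $\D$-monic (respectively last morphism $\D$-epic); these are precisely the hypotheses needed so that the image sequence is isomorphic to a standard $(d+2)$-angle in $\Phi$.

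For the core computation $\I(\overline{\X})=\overline{\I(\X)}$, I would argue that the canonical functor $\Z\to\mathfrak{U}$ sends $\X\cap\Y$ into $\overline{\X}\cap\overline{\Y}$, giving $\overline{\I(\X)}\subseteq\I(\overline{\X})$; for the reverse inclusion I would take an object whose image lies in both $\overline{\X}$ and $\overline{\Y}$ and lift it, using that $\D\subset\I(\X)$ so that objects of $\D$ cause no obstruction, to show it already sits in $\X\cap\Y$ up to the relevant summand. For the bijection claim I would construct the inverse explicitly: given a cotorsion pair $(\overline{\U},\overline{\V})$ in $\mathfrak{U}$, the preimages $\U,\V$ (the unique subcategories of $\C$ with $\D\subset\U,\V\subset\Z$ mapping onto them) should form a cotorsion pair in $\C$ with core containing $\D$. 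Verifying this is essentially the reverse of the forward direction: the orthogonality in $\C$ follows from Lemma \ref{c}(2) read backwards, and the approximation $(d+2)$-angles in $\C$ are recovered by lifting the standard $(d+2)$-angles of $\mathfrak{U}$ back to $\C$ and splicing in the defining $\D$-approximation $(d+2)$-angles of the $\D$-mutation pair $(\Z,\Z)$ from Lemma \ref{lem3}. That the two assignments are mutually inverse is then a matter of checking that $\overline{\W}$ determines $\W$ (since $\D\subset\W\subset\Z$ and any subcategory of $\mathfrak{U}$ arises this way) and that applying one construction after the other returns the original data.

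The main obstacle I anticipate is the passage between $(d+2)$-angles in $\C$ and standard $(d+2)$-angles in $\mathfrak{U}$ in both directions, and in particular arranging that the defining morphisms of the cotorsion-pair $(d+2)$-angles are $\D$-monic or $\D$-epic as required by the subfactor construction. Concretely, when I lift a cotorsion $(d+2)$-angle from $\mathfrak{U}$ back to $\C$, I must graft it together with the mutation $(d+2)$-angle $Y\to D_1\to\cdots\to D_d\to T^dY\to\Sigma^dY$ and use the octahedral-type axiom (Lemma \ref{thm1}, i.e. \textbf{(N4-1)}) to assemble a genuine $(d+2)$-angle in $\C$ with the correct end terms in $\U$ and $\V$; controlling all intermediate terms so they land in $\Z$ and checking the approximation properties is where the real bookkeeping lies. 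The orthogonality statements and the core identification, by contrast, are formal consequences of Lemma \ref{c}(2) and Lemma \ref{b} and should go through with little difficulty.
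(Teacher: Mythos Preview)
Your forward direction, core computation, and injectivity argument match the paper's proof essentially verbatim: orthogonality via Lemma~\ref{c}(2), pushing the two defining $(d+2)$-angles through the subfactor construction (legitimate because Lemma~\ref{b} forces all terms into $\Z$), and the observation $\overline{\X}\cap\overline{\Y}=\overline{\X\cap\Y}$.

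For surjectivity, however, you have the right tool (Lemma~\ref{thm1}) but the wrong splicing. The cotorsion-pair axioms in $\C$ must be verified for \emph{every} $C\in\C$, not only for $C\in\Z$; yet the standard $(d+2)$-angles you lift from $\mathfrak{U}$ only furnish approximations of objects of $\Z$. The bridge from $\Z$ to all of $\C$ is the \emph{strong functorial finiteness} of $\D$ (Definition~\ref{def1}), not the $\D$-mutation-pair structure of Lemma~\ref{lem3}, which lives entirely inside $\Z$. Concretely, the paper takes an arbitrary $C\in\C$, applies strong contravariant finiteness to $\Sigma^d C$ to obtain a $(d+2)$-angle
\[
B\to D_1\to\cdots\to D_d\xrightarrow{g}\Sigma^d C\to\Sigma^d B
\]
with $B\in\Z$, then applies the cotorsion pair in $\mathfrak{U}$ to $B$ to get (after lifting) a $(d+2)$-angle $B\to Y_1\to\cdots\to Y_d\to X\to\Sigma^d B$ in $\C$, and finally uses {\bf(N4-1)} on these two angles sharing the vertex $B$ to produce
\[
Y_1\to Y_2\oplus D_1\to\cdots\to X\oplus D_d\to\Sigma^d C\to\Sigma^d Y_1,
\]
which after rotation is the required angle for $C$ (the extra $D_i$ summands are harmless since $\D\subset\X\cap\Y$). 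Your proposed grafting with the mutation angle $Y\to D_1\to\cdots\to D_d\to T^dY\to\Sigma^dY$ addresses a different issue---converting $T^d$ to $\Sigma^d$---which the standard-angle construction already handles automatically; it does not help you reach objects outside $\Z$. Once you redirect the splice to the strong-finiteness angle at the $C$ end rather than the mutation angle at the $Y$ end, your outline becomes the paper's proof.
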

\begin{proof}
Since $(\X,\Y)$ is a cotorsion pair in $\C$ with $\D\subset\I(\X)$, we have $\D\subset\X\subset\Z$ and $\D\subset\Y\subset\Z$ by Lemma \ref{b}. So $\overline{\X}$ and $\overline{\Y}$ are subcategories of $\mathfrak{U}$. By Lemma \ref{c}, there exists an isomorphism $\mathfrak{U}(\overline{\X},T^d \overline{\Y})\cong\C(\X,\Sigma^d \Y)$, so $\mathfrak{U}(\overline{\X},T^d \overline{\Y})=0$. Since $(\X,\Y)$ is a cotorsion pair in $\C$, for any object $C\in\Z$, there exist two $(d+2)$-angles
    $$\Sigma^{-d}X\rightarrow C\rightarrow Y_1\rightarrow \cdots\rightarrow Y_d\rightarrow X$$
 and
    $$ Y\rightarrow X_1\rightarrow\cdots\rightarrow X_d \rightarrow C\rightarrow\Sigma^{d} Y$$
   with $X_i\in\X$, $Y_i\in\Y$ for $i=1,\cdots,d$, and $X\in\X$, $Y\in\Y$.
By the construction of standard $(d+2)$-angles in $\mathfrak{U}$, we have two $(d+2)$-angles in $\mathfrak{U}$
    $$\T^{-d}X\rightarrow C\rightarrow Y_1\rightarrow \cdots\rightarrow Y_d\rightarrow X$$
 and
    $$ Y\rightarrow X_1\rightarrow\cdots\rightarrow X_d \rightarrow C\rightarrow T^{d} Y$$
   with $X_i\in\overline{\X}$, $Y_i\in\overline{\Y}$ for $i=1,\cdots,d$, and $X\in\overline{\X}$, $Y\in\overline{\Y}$. Hence, $(\overline{\X},\overline{\Y})$ is a cotorsion pair in $\mathfrak{U}$. In this case, we have  $\I(\overline{\X})=\overline{\X}\cap\overline{\Y}=\overline{\X\cap\Y}=\overline{\I(\X)}$.
\vspace{2mm}

Let $(\X_1,\Y_1),(\X_2,\Y_2)$ be two cotorsion pairs in $\C$ with $\D\subset\I(\X_1),\D\subset\I(\X_2)$. If $\overline{\X_1}=\overline{\X_2}$, then by definition $\X_1=\X_2$. Hence the map $(\X,\Y)\mapsto(\overline{\X},\overline{\Y})$ is injective. Now suppose $\X$ and $\Y$ are two subcategories of $\C$ satisfying $\D\subset\X\subset\Z$ and $\D\subset\Y\subset\Z$ such that $(\overline{\X},\overline{\Y})$ is a cotorsion pair in $\mathfrak{U}$. Then $\mathfrak{U}(\overline{\X},T^d \overline{\Y})=0$, which implies $\C(\X,\Sigma^d \Y)=0$ by Lemma \ref{c}. For any object $C\in\C$, $\Sigma^d C\in\C$, since $\D$ is strongly contravariantly finite, there exists a $(d+2)$-angle $$B\rightarrow D_1\rightarrow D_2\rightarrow \cdots\rightarrow D_{d}\xrightarrow{g}\Sigma^d C\rightarrow \Sigma^d B,$$
where $D_i\in\mathcal{D}$ for $i=1,\cdots,d$, $g$ is a right $\mathcal{D}$-approximation and $B\in\Z$. For the object $B$, Since $(\overline{\X},\overline{\Y})$ is a cotorsion pair in $\mathfrak{U}$, there exists a $(d+2)$-angle
    $$T^{-d}X\rightarrow B\rightarrow Y_1\rightarrow \cdots\rightarrow Y_d\rightarrow X$$
with $X\in\overline{\X}$, $Y_i\in\overline{\Y}$ for $i=1,\cdots,d$. This implies a $(d+2)$-angle in $\C$
$$\Sigma^{-d}X\rightarrow B\rightarrow Y_1\rightarrow \cdots\rightarrow Y_d\rightarrow X$$
with $X\in\X$, $Y_i\in\Y$ for $i=1,\cdots,d$.
Thus, we have a commutative diagram of $(d+2)$-angles in $\C$
$$\xymatrix{
B \ar[r]\ar@{=}[d] & Y_1 \ar[r]\ar@{->}[d]^{\varphi} & \cdots \ar[r] &Y_d \ar[r]& X\ar[r] & \Sigma^d B \ar@{=}[d]\\
B \ar[r] & D_1 \ar[r] &\cdots \ar[r] &D_d \ar[r]^{g} & \Sigma^dC \ar[r]& \Sigma^d B\\
}$$
Since $\C(\Sigma^{-d}\X, \D)=0$, the morphism $\varphi$ exists. By Lemma \ref{thm1}, there exists a $(d+2)$-angle in $\C$
$$Y_1\rightarrow Y_2\oplus D_1\rightarrow\cdots\rightarrow Y_{d}\oplus D_{d-1}\rightarrow X\oplus D_d \rightarrow \Sigma^d C.$$
Note that $Y_{i+1}\oplus D_i\in\Y$ for $i=1,\cdots,d-1$, $X\oplus D_d\in\X$. Dually, since $\D$ is strongly covariantly finite, for any object $C\in\C$, we can similarly get a $(d+2)$-angle in $\C$
$$C\rightarrow D^\prime_1\oplus Y\rightarrow D^\prime_2\oplus _1X\rightarrow \cdots\rightarrow D^\prime_d\oplus X_{d-1}\rightarrow X_d\rightarrow C$$
with $ D^\prime_i\in\D, X_i\in\X$ for $i=1,\cdots,d$, and $Y\in\Y$. So we prove $(\X,\Y)$ is a cotorsion pair in $\C$. It follows that the map $(\X,\Y)\mapsto(\overline{\X},\overline{\Y})$ is a bijection.
\end{proof}
The following fact is obvious.
\begin{proposition}
Let $\D$ be a strongly functorially finite $d$-rigid subcategory of $\C$  satisfying  ${^\bot}(\Sigma^d\D)=(\Sigma^{-d}\D){^\bot}$, which is denoted by $\Z$. Moreover, we assume $\Z$ is extension closed.  Then the maps $\mu^{-1}_{\D}(-)$ and $\mu_{\D}(-)$ are mutually inverse on the set of subcategories $\M$ of $\C$ satisfying $\D\subset\M\subset\Z$.
\end{proposition}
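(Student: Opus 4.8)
The plan is to reinterpret the two mutation operations as the $d$-suspension functor $T^d$ of the subfactor category $\mathfrak{U}=\Z/\D$ and its inverse, and then exploit that $T^d$ is an automorphism. By Lemma \ref{lem3} the pair $(\Z,\Z)$ is a $\D$-mutation pair, so by Lemma \ref{c}(1) (via Lemma \ref{lem2}) the quotient $\mathfrak{U}$ is $(d+2)$-angulated with $d$-suspension functor $T^d$; in particular $T^d$ is an automorphism of $\mathfrak{U}$ with inverse $T^{-d}$. Comparing the $(d+2)$-angle defining $\mu^{-1}_{\D}(X)$ in Definition \ref{mu}(1) with the construction of $T^d$ recalled in Section 2, these use exactly the same data (including the matching left/right $\D$-approximation conditions, which hold because $\Hom_{\C}(\Z,\Sigma^{d}\D)=0$, as in the proof of Lemma \ref{c}); hence for every $X\in\Z$ we have $\mu^{-1}_{\D}(X)\cong T^dX$ in $\mathfrak{U}$. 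Dually, the angle defining $\mu_{\D}(X)$ exhibits $X$ as $T^d(\mu_{\D}(X))$, so $\mu_{\D}(X)\cong T^{-d}X$ in $\mathfrak{U}$. Both mutated objects lie in $\Z$ because $(\Z,\Z)$ is a $\D$-mutation pair.

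Next I would transport this to subcategories. Recall that every subcategory $\W$ of $\C$ with $\D\subset\W\subset\Z$ determines a subcategory $\overline{\W}$ of $\mathfrak{U}$, and every subcategory of $\mathfrak{U}$ arises this way. This assignment $\W\mapsto\overline{\W}$ is a bijection: injectivity follows by recovering $\W$ as $\{X\in\Z \mid \overline{X}\in\overline{\W}\}$, using the standard fact that two objects of $\Z$ are isomorphic in $\mathfrak{U}$ precisely when they agree up to direct summands in $\D$, together with $\D\subset\W$ and the closure of $\W$ under direct summands. I would then verify the two key identities
\[
  \overline{\mu^{-1}_{\D}(\M)}=T^d\,\overline{\M}
  \qquad\text{and}\qquad
  \overline{\mu_{\D}(\M)}=T^{-d}\,\overline{\M}.
\]
For the first, the subcategory $\mu^{-1}_{\D}(\M)$ consists of summands of finite direct sums of objects $\mu^{-1}_{\D}(X)$ with $X\in\M$ and objects of $\D$; passing to $\mathfrak{U}$, the objects of $\D$ become zero and $\overline{\mu^{-1}_{\D}(X)}\cong T^d\overline{X}$, so $\overline{\mu^{-1}_{\D}(\M)}$ is the subcategory generated by $\{T^d\overline{X}\mid X\in\M\}$, which equals $T^d\overline{\M}$ since $T^d$ is an additive auto-equivalence. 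The second identity is dual. Combining them gives $\overline{\mu_{\D}(\mu^{-1}_{\D}(\M))}=T^{-d}T^d\overline{\M}=\overline{\M}$ and $\overline{\mu^{-1}_{\D}(\mu_{\D}(\M))}=\overline{\M}$, whence, by injectivity of $\W\mapsto\overline{\W}$, we conclude $\mu_{\D}(\mu^{-1}_{\D}(\M))=\M=\mu^{-1}_{\D}(\mu_{\D}(\M))$.

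The genuinely delicate point is not the formal inversion $T^{-d}T^d=\mathrm{id}$, which is immediate once $\mathfrak{U}$ is known to be $(d+2)$-angulated, but rather the well-definedness bookkeeping: the object $\mu^{-1}_{\D}(X)$ is determined only up to adjoining or deleting summands in $\D$, so mutation of a single object is not literally a function on objects of $\C$. Adjoining $\D$ and closing under direct summands in the definitions of $\mu^{-1}_{\D}(\M)$ and $\mu_{\D}(\M)$ is exactly what washes out this ambiguity, and I would take care to phrase every step at the level of subcategories (equivalently, up to isomorphism in $\mathfrak{U}$) so that the choice of representative never enters. Once this is done, the only remaining verifications are that the mutated objects lie in $\Z$ and that the approximation conditions in Definition \ref{mu} coincide with those in the construction of $T^d$, both of which are provided by the $\D$-mutation pair structure of $(\Z,\Z)$ and the orthogonality $\Hom_{\C}(\Z,\Sigma^{d}\D)=0$.
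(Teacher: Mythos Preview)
Your proof is correct and is precisely the natural unpacking of what the paper has in mind: the paper states this proposition with the preface ``The following fact is obvious'' and gives no further argument, but the identification $\overline{\mu^{-1}_{\D}(\M)}=T^d\overline{\M}$ (and its dual) that you develop is exactly what the paper later uses in the proof of Theorem~\ref{main}. So your approach coincides with the paper's intended reasoning, only spelled out in full detail.
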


Now we give the main result of this section.

\begin{theorem}\label{main}
Let $(\X,\Y)$ be a cotorsion pair in $\C$ and $\D\subset\I(\X)$ be a strongly functorially finite subcategory of $\C$ satisfying  ${^\bot}(\Sigma^d\D)=(\Sigma^{-d}\D){^\bot}$, which is denoted by $\Z$. Moreover, we assume that $\Z$ is extension closed. Then the pairs $$(\mu^{-1}_{\D}(\X),\mu^{-1}_{\D}(\Y))\hspace{2mm}\mbox{and}\hspace{3mm}(\mu_{\D}(\X),\mu_{\D}(\Y))$$ are  cotorsion pairs in $\C$ and we have
$$\I(\mu^{-1}_{\D}(\X))=\mu^{-1}_{\D}(\I(\X))\hspace{2mm}\mbox{and}\hspace{3mm}\I(\mu_{\D}(\X))=\mu_{\D}(\I(\X)).$$
\end{theorem}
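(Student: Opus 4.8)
The plan is to transport the entire statement into the subfactor $(d+2)$-angulated category $\mathfrak{U}=\Z/\D$ via the bijection of Theorem \ref{d}, under which the forward mutation $\mu^{-1}_{\D}$ and the backward mutation $\mu_{\D}$ become, respectively, the $d$-suspension $T^d$ of $\mathfrak{U}$ and its inverse $T^{-d}$. The first step is to record this identification precisely: for any $X\in\Z$, the defining $(d+2)$-angle of $\mu^{-1}_{\D}(X)$ in Definition \ref{mu}(1) is exactly a $(d+2)$-angle computing the $d$-shift $T^dX$ in $\mathfrak{U}$ (both are obtained from a left $\D$-approximation of $X$ followed by a right $\D$-approximation), so $\overline{\mu^{-1}_{\D}(X)}=T^d\overline{X}$; dually $\overline{\mu_{\D}(X)}=T^{-d}\overline{X}$. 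Since each $\mu^{-1}_{\D}(X),\mu_{\D}(X)$ lies in $\Z$ and contains $\D$, we have $\D\subset\mu^{-1}_{\D}(\X),\mu^{-1}_{\D}(\Y)\subset\Z$ (and likewise for $\mu_{\D}$), so the bars are defined, and passing to $\mathfrak{U}$ yields $\overline{\mu^{-1}_{\D}(\X)}=T^d\overline{\X}$, $\overline{\mu^{-1}_{\D}(\Y)}=T^d\overline{\Y}$, with the analogous identities for $\mu_{\D}$ using $T^{-d}$. I would remark here that the mutated subcategories are well defined because the cone of a left (or right) $\D$-approximation is unique up to summands in $\D$, which are absorbed by the $\cup\,\D$ in the definition of $\mu^{\pm1}_{\D}(\X)$.

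Next I would establish the elementary fact that the suspension of $\mathfrak{U}$ carries cotorsion pairs to cotorsion pairs. By Theorem \ref{d}, $(\overline{\X},\overline{\Y})$ is a cotorsion pair in $\mathfrak{U}$. For orthogonality, $\mathfrak{U}(T^d\overline{\X},T^d(T^d\overline{\Y}))\cong\mathfrak{U}(\overline{\X},T^d\overline{\Y})=0$ since $T^d$ is an automorphism. For the approximating $(d+2)$-angles, given $C\in\mathfrak{U}$ I apply the two cotorsion-pair angles of $(\overline{\X},\overline{\Y})$ to the object $T^{-d}C$ and then apply $T^d$; because $T^d$ preserves the class of $(d+2)$-angles (this follows by iterating axiom $\mathbf{(N2)}$), the results are the required $(d+2)$-angles for $(T^d\overline{\X},T^d\overline{\Y})$ and $C$, with terms in $T^d\overline{\X}$ and $T^d\overline{\Y}$. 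Hence $(T^d\overline{\X},T^d\overline{\Y})=(\overline{\mu^{-1}_{\D}(\X)},\overline{\mu^{-1}_{\D}(\Y)})$ is a cotorsion pair in $\mathfrak{U}$, and the same argument with $T^{-d}$ shows $(\overline{\mu_{\D}(\X)},\overline{\mu_{\D}(\Y)})$ is one as well.

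Finally I would pull back along the bijection. Since $\D\subset\mu^{-1}_{\D}(\X),\mu^{-1}_{\D}(\Y)\subset\Z$ and their images form a cotorsion pair in $\mathfrak{U}$, the surjectivity part of Theorem \ref{d} (applied with $\mu^{-1}_{\D}(\X),\mu^{-1}_{\D}(\Y)$ in the roles of $\X,\Y$) gives that $(\mu^{-1}_{\D}(\X),\mu^{-1}_{\D}(\Y))$ is a cotorsion pair in $\C$; the identical argument handles $(\mu_{\D}(\X),\mu_{\D}(\Y))$. For the core formula I would combine the core-preservation $\I(\overline{\mu^{-1}_{\D}(\X)})=\overline{\I(\mu^{-1}_{\D}(\X))}$ of Theorem \ref{d} with the computation $\I(\overline{\mu^{-1}_{\D}(\X)})=T^d\overline{\X}\cap T^d\overline{\Y}=T^d(\overline{\X}\cap\overline{\Y})=T^d\overline{\I(\X)}=\overline{\mu^{-1}_{\D}(\I(\X))}$, where the middle equality uses that $T^d$ is an automorphism and the last uses the identification of the first step applied to $\I(\X)$ (which satisfies $\D\subset\I(\X)\subset\Z$). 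This gives $\overline{\I(\mu^{-1}_{\D}(\X))}=\overline{\mu^{-1}_{\D}(\I(\X))}$, and injectivity of the bar correspondence on subcategories between $\D$ and $\Z$ then forces $\I(\mu^{-1}_{\D}(\X))=\mu^{-1}_{\D}(\I(\X))$; the backward case is identical with $T^{-d}$. The only delicate point in the whole argument is the bookkeeping of the first step—verifying that mutation coincides with $T^{\pm d}$ on the nose after passing to $\mathfrak{U}$ and that every subcategory in sight lies between $\D$ and $\Z$; once that is in place, the statement follows formally from Theorem \ref{d} and the automorphism property of $T^d$.
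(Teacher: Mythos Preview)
Your proposal is correct and follows essentially the same route as the paper's proof: pass to the subfactor $(d+2)$-angulated category $\mathfrak{U}=\Z/\D$ via Theorem~\ref{d}, identify $\mu^{\pm1}_{\D}$ with $T^{\pm d}$ there, use that the suspension of $\mathfrak{U}$ preserves cotorsion pairs (what the paper phrases as ``forward $0$-mutation''), and pull back through the bijection of Theorem~\ref{d}; the core computation is likewise the same chain of equalities. Your write-up is in fact a bit more explicit than the paper's on why $T^d$ preserves cotorsion pairs and on the bookkeeping that the mutated subcategories lie between $\D$ and $\Z$.
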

\begin{proof}
By Remark \ref{l}, we have that $\I(\X)$ is $d$-rigid, then so is $\D$. So both $\mu^{-1}_{\D}(\X)$ and $\mu^{-1}_{\D}(\Y)$ are well-defined. We denote $\mu^{-1}_{\D}(\X)$, $\mu^{-1}_{\D}(\Y)$ and $\mu^{-1}_{\D}(\I(\X))$ by
$\X^\prime$,  $\Y^\prime$ and $\I^\prime$ respectively. As in the previous subsection, we denote by $\mathfrak{U}=\Z/\D$ the subfactor $(d+2)$-angulated category. By Theorem \ref{d}, we have that $(\overline{\X},\overline{\Y})$ is a cotorsion pair in $\mathfrak{U}$ and $\I(\overline{\X})=\overline{\I(\X)}$. Note that its  forward 0-mutation $(T^d\overline{\X},T^d\overline{\Y})$ is also a cotorsion pair in $\mathfrak{U}$. By the construction of $(d+2)$-angles in $\mathfrak{U}$, we have $T^d\overline{\X}=\overline{\X^\prime}$ and $T^d\overline{\Y}=\overline{\Y^\prime}$. Then by Theorem \ref{d}, we have that the pair $(\X^\prime,\Y^\prime)$ is a cotorsion pair in $\C$ and $\I(\overline{\X^\prime})=\overline{\I(\X^\prime)}$.  This means that $\overline{\I^\prime}=T^d\overline{\I(\X)}=T^d\I(\overline{\X})=
\I(\overline{\X^\prime})=\overline{\I(\X^\prime)}$. Therefore, $\I(\mu^{-1}_{\D}(\X))=\mu^{-1}_{\D}(\I(\X))$. The assertion for $\I(\mu_{\D}(\X))=\mu_{\D}(\I(\X))$ can be proved similarly.
\end{proof}



\vspace{0.5cm}

\hspace{-4mm}\textbf{Data Availability}\hspace{2mm} Data sharing not applicable to this article as no datasets were generated or analysed during
the current study.
\vspace{2mm}

\hspace{-4mm}\textbf{Conflict of Interests}\hspace{2mm} The authors declare that they have no conflicts of interest to this work.

\end{document}